\numberwithin{equation}{section}
\newtheorem{theo}{Theorem}
\newtheorem{pro}[theo]{Proposition}
\newtheorem{lem}[theo]{Lemma}
\newtheorem{cor}[theo]{Corollary}
\newtheorem{conj}[theo]{Conjecture}
\newtheorem{rem}[theo]{Remark}
\newtheorem*{thmA}{Theorem A}
\newtheorem*{thmB}{Theorem B}
\renewcommand{\(}{\left(}
\renewcommand{\)}{\right)}
\renewcommand{\-}{\overline}
\newcommand{\R}{\mathbb{R}}
\renewcommand{\S}{\mathbb{S}}
\renewcommand{\H}{\mathbb{H}}
\renewcommand{\a}{\alpha}
\newcommand{\g}{\gamma}
\renewcommand{\d}{\delta}
\newcommand{\e}{\varepsilon}
\renewcommand{\k}{\kappa}
\renewcommand{\l}{\lambda}
\newcommand{\D}{\Delta}
\renewcommand{\t}{\theta}
\newcommand{\s}{\sigma}
\newcommand{\G}{\Gamma}
\renewcommand{\L}{\Lambda}
\newcommand{\ra}{\rightarrow}
\newcommand{\mrm}{\mathrm}
\begin{document}
\title[Geometric inequalities for hypersurfaces]{Geometric inequalities for hypersurfaces with nonnegative sectional curvature in hyperbolic space}
\author{Yingxiang Hu, Haizhong Li}
\address{Yau Mathematical Sciences Center \\ Tsinghua University \\ Beijing 100086 \\ China\\}
\email{yxhu@math.tsinghua.edu.cn}
\address{Department of Mathematical Sciences \\ Tsinghua University \\ Beijing, 100084 \\ China\\}
\email{hli@math.tsinghua.edu.cn}

\begin{abstract}
In this article, we will use inverse mean curvature flow to establish an optimal Sobolev-type inequality for hypersurfaces $\Sigma$ with nonnegative sectional curvature in $\H^n$. As an application, we prove the hyperbolic Alexandrov-Fenchel inequalities for hypersurfaces with nonnegative sectional curvature in $\H^n$:
\begin{align*}
\int_{\Sigma} p_{2k}\geq \omega_{n-1}\left[\(\frac{|\Sigma|}{\omega_{n-1}}\)^\frac{1}{k}+\(\frac{|\Sigma|}{\omega_{n-1}}\)^{\frac{1}{k}\frac{n-1-2k}{n-1}}\right]^k,
\end{align*}
where $p_i$ is the normalized $i$-th mean curvature. Equality holds if and only if $\Sigma$ is a geodesic sphere in $\H^n$. For a domain $\Omega\subset \H^n$ with $\Sigma=\partial \Omega$ having nonnegative sectional curvature, we prove an optimal inequality for quermassintegral in $\H^n$:
\begin{align*}
W_{2k+1}(\Omega)\geq \frac{\omega_{n-1}}{n}\sum_{i=0}^{k}\frac{n-1-2k}{n-1-2i}C_k^i\(\frac{|\Sigma|}{\omega_{n-1}}\)^\frac{n-1-2i}{n-1},
\end{align*}
where $W_i(\Omega)$ is the $i$-th quermassintegral in integral geometry. Equality holds if and only if $\Sigma$ is a geodesic sphere in $\H^n$. All these inequalities was previously proved by Ge, Wang and Wu \cite{Ge-Wang-Wu2014} under the stronger condition that $\Sigma$ is horospherical convex.
\end{abstract}

{\maketitle}
\section{Introduction}
The geometric inequalities for hypersurfaces in hyperbolic space have recently attracted a lot of attentions, and it motivates the investigation of the curvature flow. Different from a hypersurface in $\R^n$, there are four different kinds of convexity for a hypersurface $(\Sigma,g)$ in $\H^n$:
\begin{enumerate}[(1)]
\item {\em (strictly) convex} if $\k_i>0$ for $i=1,\cdots,n-1$;
\item {\em nonnegative Ricci curvature} if $\k_i \(\sum_{j\neq i}\k_j\)\geq n-2$ for $i=1,\cdots,n-1$;
\item {\em nonnegative sectional curvature} if $\k_i\k_j\geq 1$ for  $1\leq i<j\leq n-1$;
\item {\em horospherical convex (h-convex)} if $\k_i\geq 1$ for $i=1,\cdots,n-1$,
\end{enumerate}
where $\k_1,\cdots,\k_{n-1}$ are the principal curvatures of the hypersurface $(\Sigma,g)$ in $\H^n$, respectively. In fact, these convexity conditions are in strictly ascending order \cite{Alexander-Currier1990,Alexander-Currier1993}. In \cite{Ge-Wang-Wu2014}, Ge, Wang and Wu investigated the $k$-th Gauss-Bonnet curvature $L_k$ on hypersurface $(\Sigma,g)$ in $\H^n$, which is defined by
\begin{align}\label{1.1}
L_k:=\frac{1}{2^k}\d_{j_1 j_2 \cdots j_{2k-1} j_{2k}}^{i_1 i_2 \cdots i_{2k-1} i_{2k}} R_{i_1 i_2}{}^{j_1 j_2} \cdots R_{i_{2k-1}i_{2k}}{}^{j_{2k-1}j_{2k}},
\end{align}
where $R_{ij}{}^{kl}$ is the Riemannian curvature tensor in the local coordinates with respect to the metric $g$, and the generalized Kronecker delta is defined by
\begin{align}\label{1.2}
\d_{i_1i_2\cdots i_r}^{j_1j_2\cdots j_r}=\det\(\begin{matrix}
     \d_{i_1}^{j_1} & \d_{i_1}^{j_2} & \cdots & \d_{i_1}^{j_r} \\
     \d_{i_2}^{j_1} & \d_{i_2}^{j_2} & \cdots & \d_{i_2}^{j_r} \\
      \vdots        &    \vdots      & \vdots & \vdots \\
     \d_{i_r}^{j_1} & \d_{i_r}^{j_2} & \cdots & \d_{i_r}^{j_r}
     \end{matrix}\).
\end{align}
For any hypersurface $(\Sigma,g)$ in $\H^n$, the Gauss-Bonnet curvature $L_k$ of the induced metric of the hypersurface can be expressed by
\begin{align}\label{1.3}
L_k(g)=C_{n-1}^{2k}(2k)!\sum_{j=0}^{k}(-1)^{j}C_k^jp_{2k-2j},
\end{align}
where $p_k$ is the (normalized) $k$-th mean curvature of $\Sigma$, which is defined in (\ref{2.2-2}). Ge, Wang and Wu \cite{Ge-Wang-Wu2014} established an optimal Sobolev-type inequality for h-convex hypersurfaces in $\H^n$.
\begin{thmA}
Let $n\geq 3$ and $2k<n-1$. Any h-convex hypersurface $(\Sigma,g)$ in $\H^n$ satisfies
\begin{align}\label{1.4}
\int_{\Sigma}L_k \geq C_{n-1}^{2k}(2k)!\omega_{n-1}^{\frac{2k}{n-1}}|\Sigma|^{\frac{n-1-2k}{n-1}}.
\end{align}
The equality holds if and only if $\Sigma$ is a geodesic sphere in $\H^n$.
\end{thmA}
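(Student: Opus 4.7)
The plan is to apply the inverse mean curvature flow (IMCF) $\partial_t X = H^{-1}\nu$ with initial data $\Sigma_0 = \Sigma$. By Gerhardt's results on IMCF in $\H^n$, a smooth h-convex initial hypersurface yields a smooth flow existing for all $t\geq 0$, each slice $\Sigma_t$ remains h-convex, $|\Sigma_t|=|\Sigma|\,e^t$, and after rescaling $\Sigma_t$ converges exponentially to a geodesic sphere as $t\to\infty$. The strategy is to show that the scale-invariant quantity
$$R(t):=|\Sigma_t|^{-\frac{n-1-2k}{n-1}}\int_{\Sigma_t}L_k$$
is non-increasing along the flow, and then compute its limiting value as $t\to\infty$.

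Since $\frac{d}{dt}|\Sigma_t|=|\Sigma_t|$, the monotonicity $R'(t)\leq 0$ is equivalent to the differential inequality
$$(n-1)\frac{d}{dt}\int_{\Sigma_t}L_k \;\leq\; (n-1-2k)\int_{\Sigma_t}L_k.$$
Using (\ref{1.3}), I expand $L_k$ as the alternating sum of the normalized mean curvatures $p_{2k-2j}$ with weights $(-1)^j C_k^j$. A standard computation for IMCF in $\H^n$, combining Codazzi, the Gauss equation with ambient sectional curvature $-1$, and integration by parts against the divergence-free Newton tensors, yields
$$\frac{d}{dt}\int_{\Sigma_t}p_m \;=\; \frac{n-1-m}{n-1}\int_{\Sigma_t}\frac{p_{m+1}}{p_1}+\frac{m}{n-1}\int_{\Sigma_t}\frac{p_{m-1}}{p_1},$$
the second term being the correction produced by the constant ambient curvature. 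Substituting these into $\frac{d}{dt}\int_{\Sigma_t} L_k$ and regrouping the alternating binomial sum reduces the desired differential inequality to a pointwise inequality among the $p_m$'s weighted by $(-1)^j C_k^j$.

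The main obstacle is this pointwise inequality. It is of Newton-Maclaurin type, sharpened by the h-convexity hypothesis: the hyperbolic correction terms in $p_{m-1}$ telescope against the corresponding $p_{m+1}$ contributions so that the residual expression has a definite sign precisely when $\k_i\geq 1$ for every $i$. In the Euclidean setting convexity alone would suffice (indeed only the first term above is present), but in $\H^n$ the ambient curvature contribution genuinely requires the stronger h-convexity assumption.

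Finally, on a geodesic sphere of radius $\rho$ one has $p_m=\coth^m\rho$ and $|\Sigma|=\o_{n-1}\sinh^{n-1}\rho$, so (\ref{1.3}) collapses via the binomial theorem to $L_k = C_{n-1}^{2k}(2k)!(\coth^2\rho-1)^k = C_{n-1}^{2k}(2k)!\sinh^{-2k}\rho$; hence $R\equiv C_{n-1}^{2k}(2k)!\,\o_{n-1}^{2k/(n-1)}$ on every geodesic sphere, independent of $\rho$. By the asymptotic roundness of $\Sigma_t$ this is $\lim_{t\to\infty}R(t)$, and the monotonicity gives $R(0)\geq C_{n-1}^{2k}(2k)!\,\o_{n-1}^{2k/(n-1)}$, which is (\ref{1.4}). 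Equality forces $R'(t)\equiv 0$; the rigidity of the Newton-Maclaurin step then forces $\k_1=\cdots=\k_{n-1}$ along the flow, so $\Sigma$ is totally umbilical and hence a geodesic sphere.
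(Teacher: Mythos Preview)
Your overall strategy matches the paper's: define the scale-invariant quantity $R(t)=Q_k(t)$, show it is non-increasing along IMCF via the pointwise inequality $\tilde N_k\le p_1\tilde L_k$ (this is Lemma~\ref{lem-key-inequality}, which in fact holds on the larger cone $\{\kappa_i\kappa_j\ge 1\}$), and bound the limit from below. But there is a genuine gap in your computation of $\lim_{t\to\infty}R(t)$.

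You assert that ``after rescaling $\Sigma_t$ converges exponentially to a geodesic sphere''. This is not what Gerhardt proves for IMCF in $\H^n$: his result gives only $|h_i^j-\delta_i^j|\le Ce^{-t/(n-1)}$, i.e., the principal curvatures tend to $1$, but the rescaled hypersurfaces converge to a graph over $\S^{n-1}$ whose induced metric is merely \emph{conformal} to the round metric, not equal to it. The limiting graph function is in general nonconstant (this is a well-known contrast with the Euclidean case). Thus $\lim_{t\to\infty}R(t)$ cannot be read off from the value of $R$ on geodesic spheres. The paper instead expands $L_k$ asymptotically in terms of an auxiliary tensor $T_i^j$ built from the limiting conformal data (equations (\ref{5.7})--(\ref{5.10})) and then invokes the conformal Sobolev inequality of Guan--Wang to obtain $\lim_{t\to\infty}R(t)\ge C_{n-1}^{2k}(2k)!\,\omega_{n-1}^{2k/(n-1)}$. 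This is a substantive analytic step, not a consequence of asymptotic roundness, and your proposal omits it entirely.

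There is a second gap in the equality case for $k\ge 2$. Equality in $\tilde N_k=p_1\tilde L_k$ does \emph{not} force umbilicity: Lemma~\ref{lem-key-inequality} shows there is a second equality case where one $\kappa_i>1$ and all other $\kappa_j=1$. Your sentence ``the rigidity of the Newton--Maclaurin step then forces $\kappa_1=\cdots=\kappa_{n-1}$'' is therefore unjustified. The paper excludes this degenerate branch by proving (Theorem~\ref{theo-4.2}) that an h-convex initial hypersurface becomes \emph{strictly} h-convex for $t>0$ under IMCF; strict h-convexity is incompatible with case~(ii), so $\Sigma_t$ must be totally umbilical for $t>0$, hence a geodesic sphere, and one then lets $t\to 0$.
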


When $k=1$, it was proved by Li-Wei-Xiong in \cite{Li-Wei-Xiong2014} that (\ref{1.4}) holds even for any star-shaped and strictly $2$-convex hypersurfaces in $\H^{n}$, i.e., $p_1>0$ and $p_2>0$. It was proposed by Ge-Wang-Wu (see Remark 3.4 in \cite{Ge-Wang-Wu2013}) that whether or not the inequality (\ref{1.4}) still holds for hypersurfaces with nonnegative sectional curvature in $\H^n$. The main purpose of this paper is to give an affirmative answer to this question.
\begin{theo}\label{theo-1}
Let $n\geq 3$ and $2k<n-1$. Any hypersurface $(\Sigma,g)$ with nonnegative sectional curvature in $\H^n$ satisfies
\begin{align}\label{1.5}
\int_{\Sigma}L_k \geq C_{n-1}^{2k}(2k)!\omega_{n-1}^{\frac{2k}{n-1}}|\Sigma|^{\frac{n-1-2k}{n-1}}.
\end{align}
The equality holds if and only if $\Sigma$ is a geodesic sphere in $\H^n$.
\end{theo}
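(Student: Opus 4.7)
The strategy is to run inverse mean curvature flow (IMCF) from $\Sigma$, prove that nonnegative sectional curvature is preserved along the flow, establish the monotonicity of a suitably scaled $L_k$-integral, and compute its limit using the convergence of the flow to a geodesic sphere at infinity.

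First I would observe that the condition $\k_i\k_j\geq 1$ forces every $\k_i$ to be nonzero and all $\k_i$ to have the same sign, so with the outward normal $\Sigma$ is strictly convex, hence strictly mean-convex and star-shaped. The results of Gerhardt on IMCF in $\H^n$ then provide a smooth solution $\{\Sigma_t\}_{t\geq 0}$ to $\partial_tX=H^{-1}\nu$ with $\Sigma_0=\Sigma$, satisfying $|\Sigma_t|=e^t|\Sigma_0|$ and with the rescaled hypersurfaces $\~\Sigma_t$ converging to a geodesic sphere at infinity.

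The central technical step, and the step I expect to be the main obstacle, is to prove that nonnegative sectional curvature is preserved along IMCF. Via the Gauss equation this is equivalent to the pointwise matrix condition $\k_i\k_j-1\geq 0$ for all $i\neq j$ staying in the closed convex cone of nonnegative curvature operators. I would derive the evolution of the tensor $h_i{}^kh_{kj}-g_{ij}$ under IMCF using $\partial_tg_{ij}=2H^{-1}h_{ij}$ together with the Simons-type evolution of $h_{ij}$, and apply a Hamilton-type tensor maximum principle. Unlike h-convexity, which is governed by the single-index inequality $\k_i\geq 1$ whose evolution equation has a reaction term of definite sign, the pairwise condition $\k_i\k_j\geq 1$ couples principal curvatures, and the reaction terms from $|A|^2$ and from the ambient curvature $-1$ of $\H^n$ must be compared against the boundary of the cone in a delicate algebraic way. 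The hypothesis $\k_i\k_j\geq 1$ itself will be used to dominate the bad terms on this boundary.

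Once preservation is in hand, I would show that
\begin{align*}
Q(t):=|\Sigma_t|^{-(n-1-2k)/(n-1)}\int_{\Sigma_t}L_k\,d\mu_t
\end{align*}
is nonincreasing in $t$. Using (\ref{1.3}) together with the Minkowski-type identities in $\H^n$ for the Newton transformations, $Q'(t)$ reduces to an integral of polynomial combinations of the normalized $p_{2j}$; the Newton-MacLaurin inequalities applied together with the pointwise bound $p_2\geq 1$ (an immediate consequence of $\k_i\k_j\geq 1$) then yield $Q'(t)\leq 0$, with equality only when $\Sigma_t$ is umbilical. Finally, for a geodesic sphere of radius $r$ in $\H^n$ one has $\k_i=\coth r$, $|\Sigma|=\o_{n-1}\sinh^{n-1}r$, and $L_k=C_{n-1}^{2k}(2k)!\sinh^{-2k}r$, so $Q=C_{n-1}^{2k}(2k)!\o_{n-1}^{2k/(n-1)}$ identically on spheres; combining with the asymptotic roundness of $\~\Sigma_t$ yields $Q(0)\geq\lim_{t\to\infty}Q(t)=C_{n-1}^{2k}(2k)!\o_{n-1}^{2k/(n-1)}$, which is (\ref{1.5}). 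The rigidity statement is read off from the equality case in the Newton-MacLaurin inequalities: identically vanishing $Q'$ forces $\Sigma_t$ to be a geodesic sphere for all $t$, hence so is $\Sigma_0$.
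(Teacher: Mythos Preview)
Your overall strategy matches the paper's, but there are two genuine gaps.

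\textbf{The limit.} In hyperbolic space the rescaled IMCF solutions do \emph{not} converge to a geodesic sphere; Gerhardt's result only gives $|h_i^j-\d_i^j|\leq Ce^{-t/(n-1)}$, so the principal curvatures tend to $1$ while the limiting shape at infinity is merely a graph over $\S^{n-1}$ whose induced metric lies in the conformal class of the round metric. Consequently $\lim_{t\to\infty}Q_k(t)$ is not automatically equal to $C_{n-1}^{2k}(2k)!\,\o_{n-1}^{2k/(n-1)}$. The paper obtains the lower bound on the limit by expanding $L_k$ in terms of an auxiliary tensor $T_i^j$ built from the graph function $\varphi$ and then invoking the Guan--Wang Sobolev inequality on $\S^{n-1}$; your sphere computation is the equality case of that inequality, not a computation of the actual limit.

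\textbf{The rigidity.} Your equality argument fails for $k\geq 2$. The monotonicity step rests on the pointwise inequality $\~N_k\leq p_1\~L_k$, whose proof is not Newton--MacLaurin plus $p_2\geq 1$ but a combinatorial identity (due to Ge--Wang--Wu) that rewrites $p_1\~L_k-\~N_k$ as a sum over permutations of terms of the form $\k_{i_1}(\k_{i_2}\k_{i_3}-1)\cdots(\k_{i_{2k}}-\k_{i_{2k+1}})^2$, each nonnegative precisely because $\k_i\k_j\geq 1$ pairwise. The equality analysis of this identity shows that for $k\geq 2$ there is a second case besides umbilicity: one $\k_i>1$ and all other $\k_j=1$. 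So constancy of $Q_k$ along the flow does \emph{not} directly force $\Sigma_t$ to be a sphere. The paper closes this by observing that either equality case forces $\k_i\geq 1$ everywhere, i.e.\ $\Sigma$ is h-convex; a separate tensor maximum principle argument then shows IMCF from an h-convex hypersurface is \emph{strictly} h-convex for $t>0$, which eliminates the second case and yields umbilicity for $t>0$, hence for $t=0$ by smooth convergence.
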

The importance of the inequality (\ref{1.5}) is that it can be viewed as the bricks of other geometric inequalities. To observe this, it follows from Lemmas 3.2 and 3.3 in \cite{Ge-Wang-Wu2014} that
\begin{align}\label{1.6}
\int_{\Sigma}p_{2k}=&\frac{1}{(2k)!C_{n-1}^{2k}}\sum_{i=0}^{k}C_k^i\int_{\Sigma}L_i, \\
W_{2k+1}(\Omega)=&\frac{1}{(2k)!C_{n-1}^{2k}}\frac{1}{n}\sum_{i=0}^{k}C_k^i \frac{n-1-2k}{n-1-2i}\int_{\Sigma}L_i.
\end{align}
Here $W_{2k+1}(\Omega)$ is $(2k+1)$-st quermassintegrals, which will be defined in (\ref{2.3}).

As a direct application of Theorem \ref{theo-1}, we obtain the hyperbolic Alexandorv-Fenchel inequalities for hypersurfaces with nonnegative sectional curvature in $\H^n$, which was previously proved by Ge-Wang-Wu \cite{Ge-Wang-Wu2014} for h-convex hypersurfaces in $\H^n$.
\begin{theo}\label{theo-2.1}
Let $n\geq 3$ and $2k\leq n-1$. Any hypersurface $\Sigma$ with nonnegative sectional curvature in $\H^n$ satisfies
\begin{align}\label{1.7}
\int_{\Sigma} p_{2k}\geq \omega_{n-1}\left[\(\frac{|\Sigma|}{\omega_{n-1}}\)^\frac{1}{k}+\(\frac{|\Sigma|}{\omega_{n-1}}\)^{\frac{1}{k}\frac{n-1-2k}{n-1}}\right]^k,
\end{align}
The equality holds in (\ref{1.7}) if and only if $\Sigma$ is a geodesic sphere in $\H^n$.
\end{theo}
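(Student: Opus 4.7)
The plan is to deduce Theorem~\ref{theo-2.1} as an essentially algebraic consequence of Theorem~\ref{theo-1}. The identity (\ref{1.3}) writes each $L_j$ as an alternating binomial combination of the $p_{2i}$'s, so inverting this Pascal-type transform yields the pointwise formula
\begin{align*}
p_{2k}=\sum_{i=0}^{k}\frac{C_k^i}{C_{n-1}^{2i}(2i)!}\,L_i,
\end{align*}
which is (\ref{1.6}) with the normalization redistributed into each summand. Integrating over $\Sigma$ and applying Theorem~\ref{theo-1} to each index with $2i<n-1$ gives
\begin{align*}
\int_\Sigma p_{2k}\ge \omega_{n-1}\sum_{i=0}^{k}C_k^i\,x^{\,n-1-2i},\qquad x:=\(\frac{|\Sigma|}{\omega_{n-1}}\)^{\frac{1}{n-1}}.
\end{align*}

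Next I would observe that the right-hand side is in fact a perfect $k$-th power. Factoring $x^{\,n-1-2k}$ out of every summand,
\begin{align*}
\sum_{i=0}^{k}C_k^i\,x^{\,n-1-2i}=x^{\,n-1-2k}(1+x^2)^k=\left(x^{\frac{n-1}{k}}+x^{\frac{n-1-2k}{k}}\right)^k,
\end{align*}
which converts the estimate into exactly the right-hand side of (\ref{1.7}). The only genuine subtlety is the borderline index $i=k$ when $2k=n-1$, which lies outside the range of Theorem~\ref{theo-1}. In that case $\int_\Sigma L_k$ is a constant multiple of the Chern--Gauss--Bonnet integrand of $(\Sigma,g)$, and because a hypersurface of $\H^n$ with nonnegative sectional curvature has all principal curvatures of the same sign and is therefore diffeomorphic to $S^{n-1}$ (cf.~the Alexander--Currier theory cited in the introduction), the Gauss--Bonnet theorem returns precisely the value $\omega_{n-1}(n-1)!$, which matches the $i=k$ contribution $\omega_{n-1}C_k^k$ on the right of the display above.

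Rigidity is inherited directly from Theorem~\ref{theo-1}: equality in (\ref{1.7}) forces equality in each lower bound for $\int_\Sigma L_i$ with $2i<n-1$, and any such equality forces $\Sigma$ to be a geodesic sphere. Conversely, one verifies by direct computation that on a geodesic sphere of radius $r$ both sides of (\ref{1.7}) equal $\omega_{n-1}\sinh^{n-1-2k}(r)\cosh^{2k}(r)$. All of the analytic difficulty is packed into Theorem~\ref{theo-1}; the present argument reduces to bookkeeping with binomial coefficients plus one topological identification in the top degree.
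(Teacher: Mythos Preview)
Your proposal is correct and follows essentially the same route as the paper's own proof: invert (\ref{1.3}) to express $\int_\Sigma p_{2k}$ as the binomial combination of the $\int_\Sigma L_i$'s (this is exactly (\ref{1.6}), with the normalizing constant placed on each summand), apply Theorem~\ref{theo-1} term by term, handle the borderline index $2k=n-1$ via Gauss--Bonnet--Chern, and read off rigidity from the equality case of Theorem~\ref{theo-1}. The only cosmetic difference is that the paper invokes the Hadamard theorem (do~Carmo--Warner) rather than Alexander--Currier for the diffeomorphism $\Sigma\cong\mathbb{S}^{n-1}$, and the paper does not spell out the binomial identity $\sum_i C_k^i x^{n-1-2i}=x^{n-1-2k}(1+x^2)^k$ that you make explicit.
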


We also obtain the following optimal inequalities for $W_k(\Omega)$ for general odd $k$ in terms of the area $|\Sigma|$.
\begin{theo}\label{theo-2.2}
Let $n\geq 3$ and $2k\leq n-1$. If $\Omega\subset \H^n$ is a domain with smooth boundary $\Sigma=\partial \Omega$ having nonnegative sectional curvature, then
\begin{align}\label{1.8}
W_{2k+1}(\Omega)\geq \frac{\omega_{n-1}}{n}\sum_{i=0}^{k}\frac{n-1-2k}{n-1-2i}C_k^i\(\frac{|\Sigma|}{\omega_{n-1}}\)^\frac{n-1-2i}{n-1}.
\end{align}
The equality holds in (\ref{1.8}) if and only if $\Sigma$ is a geodesic sphere in $\H^n$.
\end{theo}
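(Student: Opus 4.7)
The plan is to derive Theorem~\ref{theo-2.2} as a direct algebraic consequence of Theorem~\ref{theo-1}, using the identity \eqref{1.6} to expand $W_{2k+1}(\Omega)$ as a nonnegative linear combination of the integrated Gauss--Bonnet curvatures $\int_\Sigma L_i$ for $0\le i\le k$, and then substituting the Sobolev-type lower bound supplied by Theorem~\ref{theo-1} term by term.

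First I would observe that under the hypothesis $2k\le n-1$ every coefficient $C_k^i(n-1-2k)/(n-1-2i)$ appearing in the expansion of $W_{2k+1}(\Omega)$ in \eqref{1.6} is nonnegative. Since $\Sigma$ has nonnegative sectional curvature, Theorem~\ref{theo-1} applies for each $0\le i\le k$ with $2i<n-1$ and gives
\begin{align*}
\int_\Sigma L_i \ge C_{n-1}^{2i}(2i)!\,\omega_{n-1}^{\frac{2i}{n-1}}\,|\Sigma|^{\frac{n-1-2i}{n-1}},
\end{align*}
with equality precisely for geodesic spheres. The index $i=0$ is trivial, both sides reducing to $|\Sigma|$. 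Inserting these bounds term by term into \eqref{1.6}, the combinatorial factors $C_{n-1}^{2i}(2i)!$ cancel against the matching denominator in the identity, and after pulling out one power of $\omega_{n-1}$ the remaining powers of $\omega_{n-1}$ and $|\Sigma|$ reassemble into exactly the right-hand side of \eqref{1.8}.

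For the rigidity part, equality in \eqref{1.8} forces equality in each individual lower bound $\int_\Sigma L_i\ge C_{n-1}^{2i}(2i)!\,\omega_{n-1}^{2i/(n-1)}|\Sigma|^{(n-1-2i)/(n-1)}$ for $i\ge 1$; by the rigidity part of Theorem~\ref{theo-1} this already forces $\Sigma$ to be a geodesic sphere, and conversely a direct computation on a geodesic sphere of radius $r$ (using $p_k=\coth^k r$ and $|\Sigma|=\omega_{n-1}\sinh^{n-1}r$) verifies equality throughout. The only delicate point I foresee is the borderline case $2k=n-1$: there the factor $(n-1-2k)/(n-1-2i)$ vanishes for $i<k$ and degenerates to $0/0$ at $i=k$, and simultaneously Theorem~\ref{theo-1} as stated does not cover the top index $i=k$. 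This is handled either by a continuity-in-$k$ argument, or, more directly, by invoking the Chern--Gauss--Bonnet formula for the surviving term $\int_\Sigma L_k$. Apart from this endpoint, there is no substantial obstacle: the heart of the proof is the term-by-term application of Theorem~\ref{theo-1} followed by routine algebraic bookkeeping.
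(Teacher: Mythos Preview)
Your proposal is correct and follows essentially the same route as the paper: expand $W_{2k+1}(\Omega)$ via the identity \eqref{1.6}, apply Theorem~\ref{theo-1} term by term to each $\int_\Sigma L_i$ (the combinatorial prefactors do cancel as you say, once one reads the normalizing constant in \eqref{1.6} as $\frac{1}{(2i)!C_{n-1}^{2i}}$ inside the sum), and treat the endpoint $2k=n-1$ with the Gauss--Bonnet--Chern identity $\int_\Sigma L_{(n-1)/2}=(n-1)!\,\omega_{n-1}$. The paper's proof and your sketch are the same argument.
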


As a corollary, we solve the isoperimetric problem for hypersurfaces with nonnegative sectional curvature in hyperbolic space with fixed $W_1=\frac{|\Sigma|}{n}$, which was posed by Gao-Hug-Schneider \cite{Gao-Hug-Schneider2003}.
\begin{cor}
Let $n\geq 3$ and $2k\leq n-1$. In the class of hypersurfaces with nonnegative sectional curvature and the fixed $W_1$ in $\H^n$, the minimum of $W_{2k+1}$ is achieved if and only if $\Sigma$ is a geodesic sphere in $\H^n$..
\end{cor}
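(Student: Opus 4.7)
The plan is to deduce the corollary immediately from Theorem~\ref{theo-2.2}. The first observation is that in integral geometry $W_1(\Omega) = |\Sigma|/n$ (this is the normalization used in (\ref{2.3})), so prescribing the value of $W_1$ is the same as prescribing the area $|\Sigma|$. Hence, restricted to the admissible class of domains $\Omega \subset \H^n$ whose boundary $\Sigma = \partial\Omega$ has nonnegative sectional curvature and with a fixed value of $W_1$, the right-hand side of (\ref{1.8}) is a constant depending only on the prescribed area.

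Applying Theorem~\ref{theo-2.2} (whose hypothesis $2k \le n-1$ is exactly the one in the corollary) therefore gives a uniform lower bound
\[
W_{2k+1}(\Omega) \;\ge\; \frac{\omega_{n-1}}{n}\sum_{i=0}^{k}\frac{n-1-2k}{n-1-2i}C_k^i\left(\frac{|\Sigma|}{\omega_{n-1}}\right)^{\frac{n-1-2i}{n-1}}
\]
for every domain in the admissible class. It remains to verify that this lower bound is in fact attained. Any geodesic sphere of radius $r$ has principal curvatures $\kappa_i = \coth r > 1$, so it is horospherically convex and a fortiori of nonnegative sectional curvature; thus, for any prescribed value of $W_1$, the geodesic sphere of the corresponding radius lies in the admissible class and, by the equality clause of Theorem~\ref{theo-2.2}, realizes equality in (\ref{1.8}). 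Conversely, that same equality clause forces any minimizer to be a geodesic sphere, which gives the "only if" direction.

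Since all the real work has been done in Theorem~\ref{theo-2.2}, there is no substantive obstacle in this corollary. The only mild points to check are the identity $W_1 = |\Sigma|/n$ (immediate from the definition of quermassintegrals used in the paper) and the fact that geodesic spheres sit in the admissible class (immediate from $\coth r > 1$).
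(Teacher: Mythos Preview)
Your proof is correct and follows exactly the approach implicit in the paper: the corollary is stated there without proof, as an immediate consequence of Theorem~\ref{theo-2.2} together with the identity $W_1=\frac{|\Sigma|}{n}$ (which the paper quotes in the sentence introducing the corollary). Your argument fills in precisely these routine details.
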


In order to prove Theorem \ref{theo-1}, the idea is similar to Ge-Wang-Wu \cite{Ge-Wang-Wu2014}. We consider the functional:
\begin{align}\label{1.9}
Q_k(t):=|\Sigma_t|^{-\frac{n-1-2k}{n-1}}\int_{\Sigma_t} L_k.
\end{align}
Let $X_0:M^{n-1}\ra \H^n$ be a smooth embedding such that the initial hypersurface $\Sigma=X_0(M)$ is a closed smooth hypersurface in $\H^n$. We consider the smooth family of immersions $X:M^{n-1} \times [0,T)\ra \H^n$ evolves along the inverse mean curvature flow (IMCF):
\begin{align}\label{1.10}\left\{ \begin{aligned}
\frac{\partial}{\partial t}X(x,t)=&\frac{1}{H(x,t)}\nu(x,t),\\
X(\cdot,0)=&X_0(\cdot),\end{aligned}\right.
\end{align}
where $H(x,t)$ is the mean curvature and $\nu(x,t)$ is the unit outward normal vector of $\Sigma_t=X(M,t)$, respectively.

In Section 3, we will show that if the initial hypersurface $\Sigma$ is smooth, closed and has nonnegative sectional curvature, then the flow hypersurface $\Sigma_t$ of the IMCF has nonnegative sectional curvature for any time $t>0$. This will be crucial in establishing the inequality (\ref{1.5}) for hypersurfaces with nonnegative sectional curvature in hyperbolic space. The method we used here is motivated by the recent important work of Andrews-Chen-Wei \cite{Andrews-Chen-Wei2018}, where they proved that the nonnegativity of sectional curvature is preserved along volume preserving curvature flows in hyperbolic space.

In Section 4, we use Andrews' maximum principle for tensors to show that if the initial hypersurface $\Sigma$ in hyperbolic space is h-convex, then the flow hypersurface $\Sigma_t$ of the IMCF becomes strictly h-convex for $t>0$. The idea we used here follows from the recent work of Andrews and Wei \cite{Andrews-Wei2017}, and the proof does not rely on the constant rank theorem as in \cite{Wang-Xia2014}. This property will be crucial to establish the rigidity part of the inequality (\ref{1.5}) under the weaker condition that $\Sigma$ has nonnegative sectional curvature.

In Section 5, we will show that the functional $Q_k$ is non-increasing along the IMCF, provided that the initial hypersurface has nonnegative sectional curvature. An important observation is that if $Q_k$ is constant along the IMCF, then the hypersurface with nonnegative sectional curvature is h-convex.

In Section 6, by the convergence result of Gerhardt \cite{Gerhardt2011} on the IMCF, we show that the flow approaches to hypersurfaces whose induced metrics belong to the conformal class of the standard round sphere metric. A generalized Sobolev inequality of Guan-Wang \cite{Guan-Wang2004} shows that
\begin{align*}
Q_k(0) \geq \lim_{t\ra \infty} Q_k(t) \geq C_{n-1}^{2k} (2k)! \omega_{n-1}^{\frac{2k}{n-1}}.
\end{align*}
The argument to establish this inequality is the same as that in Ge-Wang-Wu \cite{Ge-Wang-Wu2014}. However, as we use the IMCF instead of the inverse curvature flows used by Ge-Wang-Wu \cite{Ge-Wang-Wu2014}, the rigidity part of the inequality (\ref{1.5}) needs to be proved in a completely different way. If the equality holds in (\ref{1.5}), then $Q_k$ is constant along the IMCF and thus the initial hypersurface is h-convex. For any $t>0$, the flow hypersurface $\Sigma_t$ of IMCF is strictly h-convex. Together with the equality characterization of the inequality (\ref{4.2}), we show that $\Sigma_t$ is totally umbilical and hence it is a geodesic sphere. Finally, the initial hypersurface is smoothly approximated by a family of geodesic spheres, and it must be a geodesic sphere in $\H^n$.

In Section 7, we can also prove the Alexandrov-Fenchel type inequality for $\int_\Sigma p_1$ for hypersurfaces with nonnegative Ricci curvature in $\H^n$. We also mention a weaker version of the Alexandrov-Fenchel type inequality for $\int_\Sigma p_1$, which holds for any star-shaped and mean convex hypersurfaces in $\H^n$.

By studying the quermassintegral preserving flow, Wang and Xia \cite{Wang-Xia2014} proved the Alexandrov-Fenchel type inequalities for h-convex hypersurface in hyperbolic space. More recently, Andrews-Chen-Wei \cite{Andrews-Chen-Wei2018} considered a volume preserving flow with nonnegative sectional curvature, and proved some Alexandrov-Fenchel type inequalities under the weaker assumption of nonnegative sectional curvature.

\section*{Acknowledgements}
We would like to thank Yong Wei for his interest and comments. The second author was supported by NSFC grant No.11671224.

\section{Preliminaries}
\subsection{Curvature integrals and Quermassintegrals}
We recall some basic concepts and formulas in integral geometry. We refer to Santal\'o's book \cite{Santalo1976}, see also Schnerder \cite{Schnerder1993} or Solanes \cite{Solanes2003,Solanes2006} for details.

The hyperbolic space $\H^n$ is an $n$-dimensional simply connected Riemannian manifold of constant sectional curvature $-1$. Let $\Omega$ be a domain with smooth boundary $\Sigma=\partial\Omega$ in $\H^n$, then $\Sigma$ is a closed hypersurface in $\H^n$ with unit outward normal $\nu$. The second fundamental form $h$ of $\Sigma$ is defined by
\begin{align*}
h(X,Y)=\langle \-\nabla_X \nu, Y\rangle,
\end{align*}
for any tangent vector fields $X,Y$ on $\Sigma$. For an orthonormal basis $\{e_1,\cdots,e_{n-1}\}$ of $\Sigma$, the second fundamental form is $h=(h_{ij})$ and the Weingarten tensor is $\mathcal{W}=(h_i^j)=(g^{jk}h_{ki})$, where $g$ is the induced metric on $\Sigma$. The principal curvatures $\k=(\k_1,\cdots,\k_{n-1})$ are the eigenvalues of $\mathcal{W}$.

Let $\s_k$ be the $k$-th elementary symmetric function $\s_k:\R^{n-1}\ra \R$ defined by
\begin{align}\label{2.1}
\s_k(\l)=\sum_{i_1<\cdots<i_k}\l_{i_1}\cdots \l_{i_k}, \quad \text{for}~ \l=(\l_1,\cdots,\l_{n-1})\in \R^{n-1}.
\end{align}
We also take $\s_0=1$ by convention. The Garding cone is defined as
\begin{align*}
\G_k^+=\{\l\in \R^{n-1}~|~\s_j(\l)>0,\forall j\leq k\}.
\end{align*}
We denoted by $\-{\G_{k}^{+}}$ the closure of $\G_{k}^{+}$. Let
$$
p_k(\l)=p_k(\l)=\frac{\s_k(\l)}{C_{n-1}^k}
$$
be the normalized $k$-th symmetric functions, then we have the well-known Newton-MacLaurin inequalities (see Lemma 2.7 on p55 in \cite{Guan2014}).
\begin{lem}
Let $1\leq k\leq n-1$. For $\l \in \G_k^+$, we have
\begin{align}\label{2.2}
p_1 p_{k-1}\geq p_k, \quad p_1 \geq p_2^{1/2} \geq \cdots \geq p_k^{1/k}.
\end{align}
Moreover, the above equalities hold if and only if $\l=\a(1,\cdots,1)$ for some $\a>0$.
\end{lem}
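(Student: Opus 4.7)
My plan is to derive both sets of inequalities from Newton's inequalities
\begin{align*}
p_j^2 \geq p_{j-1}\, p_{j+1}, \quad 1 \leq j \leq k-1,
\end{align*}
with the convention $p_0=1$, valid on $\-{\G_k^+}$. Establishing Newton's inequalities is the main technical step. I would consider the polynomial
\begin{align*}
f(t) = \prod_{i=1}^{n-1}(1+\l_i t) = \sum_{j=0}^{n-1} C_{n-1}^{j}\, p_j\, t^j,
\end{align*}
whose roots $-1/\l_i$ are real. By Rolle's theorem, every derivative of $f$ is again real-rooted. After an appropriate number of derivatives, combined with the reciprocal substitution $t \mapsto 1/t$, one extracts a real-rooted quadratic proportional to $p_{j-1}\, s^2 + 2 p_j\, s + p_{j+1}$, whose nonnegative discriminant is exactly $p_j^2 \geq p_{j-1}\, p_{j+1}$.

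Granted Newton, the Maclaurin chain $p_1 \geq p_2^{1/2} \geq \cdots \geq p_k^{1/k}$ follows by induction on $j$. The base case $p_1^2 \geq p_2$ is Newton at $j=1$. For the inductive step, assuming $p_{j-1}^{\,j} \geq p_j^{\,j-1}$, I raise Newton to the $j$-th power to obtain
\begin{align*}
p_j^{\,2j} \geq p_{j-1}^{\,j}\, p_{j+1}^{\,j} \geq p_j^{\,j-1}\, p_{j+1}^{\,j},
\end{align*}
which rearranges to $p_j^{\,j+1} \geq p_{j+1}^{\,j}$, i.e., $p_j^{1/j} \geq p_{j+1}^{1/(j+1)}$. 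The remaining inequality $p_1\, p_{k-1} \geq p_k$ then follows by applying Maclaurin twice: the bounds $p_{k-1} \geq p_k^{(k-1)/k}$ (from $p_{k-1}^{1/(k-1)} \geq p_k^{1/k}$) and $p_1 \geq p_k^{1/k}$ multiply to give $p_1\, p_{k-1} \geq p_k^{1/k}\cdot p_k^{(k-1)/k} = p_k$.

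For the rigidity, equality in any Newton step $p_j^2 = p_{j-1}\, p_{j+1}$ forces the reduced quadratic above to have a double root. Tracing the Rolle differentiation chain backward, using the sharp form of Rolle's theorem for real-rooted polynomials with coincident roots, this double root must have propagated from a corresponding coincidence of roots of $f$, so all $\l_i$ must coincide; positivity $\a > 0$ then follows from $\l \in \G_k^+$. Equality in the Maclaurin chain or in $p_1 p_{k-1} = p_k$ forces equality in a consecutive pair of Newton inequalities (tracing back through the raised power and the two-step multiplication respectively), yielding the same conclusion $\l = \a(1,\dots,1)$. The main obstacle, and the step requiring the most care, is precisely this backward propagation of equality through the iterated differentiation, since a single coincidence at the final quadratic must be shown to force full equality of the $\l_i$ rather than merely a partial coincidence.
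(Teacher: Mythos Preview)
The paper does not prove this lemma at all; it is stated as the well-known Newton--MacLaurin inequalities with a bare reference to Lemma~2.7 in Guan's notes \cite{Guan2014}. So there is no argument in the paper to compare yours against.

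Your route is the classical one and is correct for the inequalities themselves. One small correction: Newton's inequalities $p_j^2\ge p_{j-1}p_{j+1}$ hold for \emph{all} real $\l\in\R^{n-1}$, not only on $\-{\G_k^+}$; the cone hypothesis enters only when you take positive $j$-th roots and divide by $p_j>0$ in the Maclaurin induction.

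The obstacle you flag in the rigidity is genuine but closes cleanly once you track degrees using the $\G_k^+$ hypothesis. Work with $g(t)=\prod_i(t+\l_i)$, which has degree exactly $n-1$ regardless of how many $\l_i$ vanish. Differentiating $n-2-j$ times, reversing, and differentiating $j-1$ further times yields the quadratic with coefficients proportional to $\s_{j-1},\s_j,\s_{j+1}$; for $\l\in\G_k^+$ and $1\le j\le k-1$ all three are strictly positive, so every polynomial in the chain keeps its full degree and all roots real and finite. The backward step is then: if $P$ is real-rooted of degree $d+1\ge 3$ and $P'=c(t-\rho)^d$, then $P=\frac{c}{d+1}(t-\rho)^{d+1}+C$, which for $d+1\ge 3$ has all roots real only when $C=0$. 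Iterating through the entire chain (the reversal step preserves ``all roots equal'' since the constant terms are nonzero) forces $g(t)=(t+\a)^{n-1}$, i.e.\ all $\l_i=\a$, and $\a>0$ follows from $p_1>0$. So the partial-coincidence scenario you worried about is ruled out precisely by the positivity of $\s_{j-1},\s_j,\s_{j+1}$, which prevents any degree drop along the way.
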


The {\em normalized $k$-th mean curvature} of $\Sigma$ is defined by
\begin{align}\label{2.2-2}
p_k(x)=p_k(\k(x)), \quad x\in \Sigma, \quad  0\leq k \leq n-1,
\end{align}
and the {\em curvature integrals} are defined by
\begin{align*}
V_{n-1-j}(\Omega)=\int_{\Sigma}p_j, \quad 0\leq j\leq n-1.
\end{align*}
For a convex domain $\Omega\subset \H^n$, the {\em quermassintegrals} are defined by
\begin{align}\label{2.3}
W_r(\Omega):=\frac{(n-r)\omega_{r-1}\cdots\omega_{0}}{n\omega_{n-2}\cdots \omega_{n-r-1}}\int_{\mathcal{L}_r}\chi(L\cap \Omega)dL, \quad r=1,\cdots,n-1,
\end{align}
where $\mathcal{L}_r$ is the space of $r$-dimensional totally geodesic subspaces $L$ in $\H^n$, and $dL$ is the natural measure on $\mathcal{L}_r$ which is invariant under the isometry group of $\H^n$. The function $\chi$ is defined to be $1$ if $L\cap \Omega\neq \emptyset$ and to be $0$ otherwise. Furthermore, we set $W_0(\Omega)=\mrm{Vol}(\Omega)$, $W_n(\Omega)=\omega_{n-1}/n$. The quermassintegrals and curvature integrals in $\H^n$ are related by the following recursive formulas (see Proposition 7 in \cite{Solanes2006}):
\begin{align}\label{2.4}
V_{n-1-j}(\Omega)=n\(W_{j+1}(\Omega)+\frac{j}{n-j+1}W_{j-1}(\Omega)\),\quad j=1,\cdots,n-1.
\end{align}

\subsection{Evolution equations for IMCF}
Let $X_0:M^{n-1}\ra \H^n$ be a smooth embedding such that $\Sigma=X_0(M)$ is a closed smooth hypersurface in $\H^n$. We consider the smooth family of immersions $X:M^{n-1} \times [0,T)\ra \H^{n}$ evolves along the inverse mean curvature flow (IMCF):
\begin{align}\label{9.1}\left\{ \begin{aligned}
\frac{\partial}{\partial t}X(x,t)=&\frac{1}{H(x,t)}\nu(x,t),\\
X(\cdot,0)=&X_0(\cdot),\end{aligned}\right.
\end{align}
where $H(x,t)$ is the mean curvature and $\nu(x,t)$ is the unit outward normal vector of the hypersurface $\Sigma_t=X(M,t)$, respectively.

Along the IMCF (\ref{9.1}), we have the following evolution equations on the Weingarten tensor $\mathcal{W}=(h_i^j)$ of $M_t$ (see \cite{Andrews1994}):
\begin{align}\label{9.2}
\frac{\partial}{\partial t}h_i^j=\frac{1}{H^2}\D h_{i}^{j}-\frac{2}{H^3}\nabla_i H\nabla^j H+\frac{1}{H^2}(|A|^2+n-1)h_i^j-\frac{2}{H}(h^2)_i^j,
\end{align}
where $\nabla$ denotes the Levi-Civita connection with respect to the induced metric $g_{ij}$ on $M_t$. For simplicity, we will omit the subscript $t$ and the volume form $d\mu_t$ if there is no ambiguity. By the variational formula by Reilly \cite{Reilly1973}, one can check that along the IMCF we have
\begin{align}\label{2.12}
\frac{d}{dt}\int_{\Sigma}p_k=\int_{\Sigma} \( (n-1-k)p_{k+1}+k p_{k-1}\)\frac{1}{(n-1)p_1}, \quad k=0,\cdots,n-1.
\end{align}

\section{Preserving nonnegative sectional curvature along IMCF}
In this section, we prove that if the initial hypersurface $\Sigma$ in hyperbolic space is smooth, closed and has nonnegative sectional curvature, then the solution $\Sigma_t$ of the IMCF has nonnegative sectional curvature for any time $t>0$. Inspired by a recent work due to Andrews-Chen-Wei \cite{Andrews-Chen-Wei2018}, we show that the nonnegativity of sectional curvature of the hypersurface is preserved along the IMCF. The argument is related to that used by Andrews \cite{Andrews2007} to prove a generalized tensor maximum principle, see Theorem \ref{theo-4.1} in Section 4. However, it can not be deduced directly from that result. The argument combines the ideas of the generalized tensor maximum principle with those of vector bundle maximum principles for reaction-diffusion equations \cite{Andrews-Hopper2011,Hamilton1986}.
\begin{theo}\label{theo-3}
If the initial hypersurface $\Sigma$ has nonnegative sectional curvature, then along the IMCF (\ref{9.1}) the evolving hypersurface $\Sigma_t$ has nonnegative sectional curvature for $t>0$.
\end{theo}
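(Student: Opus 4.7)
My strategy is to apply a tensor maximum principle to the intrinsic Riemann tensor of $\Sigma_t$. By the Gauss equation for $\Sigma_t \subset \H^n$,
\begin{align*}
R_{ijkl} = h_{ik}h_{jl} - h_{il}h_{jk} - (g_{ik}g_{jl} - g_{il}g_{jk}),
\end{align*}
so nonnegative sectional curvature is equivalent to $R(v,w,v,w) \ge 0$ for all tangent $v,w$. A standard linear-algebra fact identifies
\begin{align*}
\min_{v,w \text{ o.n.}} R(v,w,v,w) = \k_1\k_2 - 1,
\end{align*}
where $\k_1 \le \k_2 \le \cdots \le \k_{n-1}$ are the ordered principal curvatures, so the theorem reduces to preserving $\k_1\k_2 \ge 1$ along the IMCF.

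First I would derive, from (\ref{9.2}) together with $\partial_t g_{ij} = (2/H) h_{ij}$, the parabolic evolution of the relevant tensor. Assume by contradiction that some sectional curvature first vanishes at $(x_0, t_0)$ with $t_0 > 0$. Choosing an orthonormal frame at $x_0$ that diagonalizes $h$, the vanishing $2$-plane is $\mrm{span}(e_1, e_2)$ with $\k_1\k_2 = 1$, and first-order conditions yield $\D(\k_1\k_2)(x_0) \ge 0$. From (\ref{9.2}) the zero-order reaction contribution to $\partial_t(\k_1\k_2)$ is
\begin{align*}
\frac{2}{H^2}(|A|^2 + n - 1)\k_1\k_2 - \frac{2}{H}(\k_1^2 \k_2 + \k_1 \k_2^2).
\end{align*}
Using $\k_1\k_2 = 1$, a direct expansion produces the key algebraic identity
\begin{align*}
|A|^2 + n - 1 - H(\k_1 + \k_2) = \sum_{i \ge 3}(\k_i - \k_1)(\k_i - \k_2),
\end{align*}
whose right-hand side is nonnegative: at the first bad time we still have $\k_i\k_1 \ge 1$ and $\k_i\k_2 \ge 1$ for $i \ge 3$, and together with $\k_1 \le \k_2$ and $\k_1\k_2 = 1$ these force $\k_i \ge 1/\k_1 = \k_2 \ge 1 \ge \k_1$, so every summand is $\ge 0$. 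This is precisely the null-eigenvector inequality required to conclude preservation.

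The hard part is implementing this maximum-principle argument rigorously. Two technical issues arise: when the smallest two principal curvatures coincide (or more generally have multiplicities), the function $\k_1\k_2$ is only Lipschitz in $h$, so a naive scalar maximum principle fails; and the gradient term $-\tfrac{2}{H^3}\nabla_i H \nabla^j H$ in (\ref{9.2}) produces a negative semi-definite perturbation that must be absorbed by first-order conditions at the minimum. Both obstacles are overcome by invoking the generalized tensor maximum principle of Andrews, in the form developed in \cite{Andrews-Chen-Wei2018}: working directly with the tensor $R_{ijkl}$ (equivalently, with the condition $\k_i\k_j\ge 1$ encoded via $h$) and exploiting the freedom to extend the minimizing $2$-plane suitably in space and time, the algebraic identity above provides the null-vector inequality and one concludes that $\k_1\k_2 \ge 1$ persists for all $t > 0$.
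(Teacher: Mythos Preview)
Your framework is the same as the paper's: work on the frame bundle with the function $G=\k_1\k_2-1$, split $\partial_t G$ at a minimum into a zero-order reaction piece and a second-order/gradient piece, and check the null-vector inequality. You also recover the key algebraic identity
\[
|A|^2+n-1-H(\k_1+\k_2)=\sum_{k}(\k_k-\k_1)(\k_k-\k_2)\quad\text{when }\k_1\k_2=1,
\]
which is exactly what the paper uses to show the reaction term satisfies $R_2\ge -CG$.

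The genuine gap is the gradient piece. You write that the bad term $-\tfrac{2}{H^3}\nabla_iH\nabla^jH$ is ``absorbed by first-order conditions'' and that ``invoking the generalized tensor maximum principle of Andrews, in the form developed in \cite{Andrews-Chen-Wei2018}'' closes the argument. Neither reference does this for you. The paper itself notes that the argument ``can not be deduced directly from'' Andrews' tensor maximum principle (the condition $\k_1\k_2\ge1$ is a $2$-plane condition, not a null-vector condition on a symmetric $2$-tensor), and \cite{Andrews-Chen-Wei2018} treats volume-preserving flows, not IMCF; the gradient estimate is flow-specific. What actually has to be proved is that, after optimizing over the frame extension $\G$, the combination
\[
R_1=\frac{\k_1}{H^2}\D h_{22}+\frac{\k_2}{H^2}\D h_{11}-\frac{2\k_1}{H^3}|\nabla_2H|^2-\frac{2\k_2}{H^3}|\nabla_1H|^2
\]
is $\ge0$ at the minimum. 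This is the main work in the paper: using $\nabla_iG=0$ and Cauchy--Schwarz one reduces to the scalar inequalities (\ref{9.7})--(\ref{9.8}), which are then verified by a delicate choice of auxiliary parameter $\e=\frac{\hat H(\k_1+\k_2)+4\k_1\k_2}{(\k_2-\k_1)^2}$, with separate treatment of the cases $n-1=2$, $\k_1=\k_2$, and $\k_1<\k_2$. Your zero-order identity plays no role here; the first-order condition $\nabla G=0$ alone is not enough to kill $-\tfrac{2}{H^3}|\nabla H|^2$ without this computation. Until you supply that estimate (or an equivalent one tailored to the IMCF speed $1/H$), the proof is incomplete.
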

\begin{proof}
The sectional curvature defines a smooth function on the Grassmannian bundle of $2$-dimensional subspace of $TM$. For convenience, we lift this to a function on the orthonormal frame bundle $O(M)$ over $M$: Given a point $x\in M$ and $t\geq 0$, and a frame $\mathbb{O}=\{e_1,\cdots,e_{n-1}\}$ of $T_x M$ which is orthonormal with respect to the metric $g(x,t)$, we define
\begin{align*}
G(x,t,\mathbb{O})=h_{(x,t)}(e_1,e_1) h_{(x,t)}(e_2,e_2)-h_{(x,t)}(e_1,e_2)^2-1.
\end{align*}
We consider a point $(x_0,t_0)\in M\times [0,t_0]$ and a frame $\mathbb{O}_{0}=\{\-{e}_1,\cdots,\-{e}_{n-1}\}$ at which a new minimum of the function $G$ is attained, so that we have
\begin{align*}
G(x,t,\mathbb{O}) \geq G(x_0,t_0,\mathbb{O}_{0}),
\end{align*}
for all $x\in M$, $t\in [0,t_0]$ and $\mathbb{O}\in F(M)_{(x,t)}$. The fact that $\mathbb{O}_0$ achieves the minimum of $G$ over the fiber $F(M)_{(x_0,t_0)}$ implies that $\-e_1$ and $\-e_2$ are eigenvectors of $h_{(x_0,t_0)}$ corresponding to the principal curvatures $\k_1$ and $\k_2$, where $\k_1\leq \k_2 \leq \cdots \leq \k_{n-1}$ are the principal curvatures at $(x_0,t_0)$. Since $G$ is invariant under rotation in the subspace orthogonal to $\{\-e_1,\-e_2\}$, we can assume that $h(\-e_i,\-e_i)=\k_i$, $h(\-e_i,\-e_j)=0$ for $i\neq j$.

We derive the evolution equation for $G$ at $(x_0,t_0,\mathbb{O}_0)$. Note that the evolving frame $\mathbb{O}(t)$ for $T_xM$ is defined by
\begin{align*}
\frac{d}{dt}e_i(t)=\frac{1}{H}\mathcal{W}(e_i(t)), \quad e_i(t_0)=\-e_i, \quad \text{for $i=1,\cdots,n-1$}.
\end{align*}
Then the frame remains orthonormal with respect to the metric $g(x,t)$. From (\ref{9.2}) we get
\begin{align*}
\frac{d}{dt}G |_{(x_0,t_0,\mathbb{O}_0)}=&\k_1 \frac{\partial}{\partial t}h_2^2+\k_2 \frac{\partial}{\partial t}h_1^1 \\
=&\frac{\k_1}{H^2}\D h_{22}+\frac{\k_2}{H^2}\D h_{11}-\frac{2\k_1}{H^3}|\nabla_2 H|^2-\frac{2\k_2}{H^3}|\nabla_1 H|^2\\
 &+\frac{2\k_1\k_2}{H^2}\(|A|^2+n-1-(\k_1+\k_2)H\).
\end{align*}
We denote
\begin{align*}
R_1:=&\frac{\k_1}{H^2}\D h_{22}+\frac{\k_2}{H^2}\D h_{11}-\frac{2\k_1}{H^3}|\nabla_2 H|^2-\frac{2\k_2}{H^3}|\nabla_1 H|^2,\\
R_2:=&\frac{2\k_1\k_2}{H^2}\(|A|^2+n-1-(\k_1+\k_2)H\).
\end{align*}
First, we have
\begin{align*}
\frac{2}{H^2}\(|A|^2-(\k_1+\k_2)H+(n-1)\k_1\k_2\)=&\frac{2}{H^2}\sum_{k=1}^{n-1}(\k_k-\k_2)(\k_k-\k_1) \geq 0;\\
-\frac{2(\k_1\k_2-1)}{H^2}(-|A|^2+(\k_1+\k_2)H)=&-G\frac{2}{H^2}\sum_{k=1}^{n-1} \k_k(-\k_k+\k_1+\k_2).
\end{align*}
Sum all these terms up, we have
\begin{align*}
R_2=&\frac{2\k_1\k_2}{H^2}(|A|^2+n-1-(\k_1+\k_2)H) \\
=&\frac{2}{H^2}\left[ |A|^2-(\k_1+\k_2)H+(n-1)\k_1\k_2+(\k_1\k_2-1)|A|^2 \right.\\
&\left.-(\k_1\k_2-1)(\k_1+\k_2)H\right] \\
=&\frac{2}{H^2}\sum_{k=1}^{n-1}(\k_k-\k_2)(\k_k-\k_1)-G\frac{2}{H^2}\sum_{k=1}^{n-1} \k_k(-\k_k+\k_1+\k_2) \\
\geq &-G\frac{2}{H^2}\sum_{k=1}^{n-1} \k_k(-\k_k+\k_1+\k_2) \\
\geq &-\frac{2(n-2)}{n-1}G=:-CG,
\end{align*}
where the last inequality follows from
\begin{align*}
 &\frac{2}{H^2}\sum_{k=1}^{n-1} \k_k(-\k_k+\k_1+\k_2)\\
=&\frac{2}{H^2}(-|A|^2+(\k_1+\k_2)H)\\
\leq & \frac{2}{H^2}\(-\frac{1}{n-1}H^2+H^2\)\leq \frac{2(n-2)}{n-1}.
\end{align*}
To estimate $R_1$, we consider the second derivatives of $G$ along a curve on $O(M)$ defined as follows: Let $\g$ be any geodesic of $g(t_0,\cdot)$ in $M$ with $\g(0)=x_0$, and define a frame $\mathbb{O}(s)=(e_1(s),\cdots,e_{n-1}(s))$ at $\g(s)$ by taking $e_i(0)=\-e_i$ for each $i$ and $\nabla_s e_i(s)=\G_{ij} e_{j}(s)$ for some constant antisymmetric matrix $\G$. Then we have
\begin{equation}\label{9.4}
\begin{split}
\left. \frac{d^2}{ds^2}G(x(s),t_0,\mathbb{O}(s)) \right|_{s=0}=&\k_2 \nabla_s^2 h_{11}+\k_1 \nabla_s^2 h_{22}+2(\nabla_s h_{22} \nabla_s h_{11}-(\nabla_s h_{12})^2) \\
&+4\sum_{p=3}^{n-1}\G_{1p}\k_2 \nabla_s h_{1p}+4\sum_{p=3}^{n-1}\G_{2p}\k_1\nabla_s h_{2p} \\
&+2\sum_{p=3}^{n-1}\G_{1p}^2 \k_2(\k_p-\k_1)+2\sum_{p=3}^{n-1}\G_{2p}^2\k_1(\k_p-\k_2).
\end{split}
\end{equation}
Since $G$ has a minimum at $(x_0,t_0,\mathbb{O}_0)$, the RHS of (\ref{9.4}) is nonnegative for any choice of $\G$. Minimizing all $\G$ gives
\begin{align*}
0\leq &\k_2 \nabla_s^2 h_{11}+\k_1 \nabla_s^2 h_{22}+2\(\nabla_s h_{22}\nabla_s h_{11}-(\nabla_s h_{12})^2\)\\
&-2\sum_{p=3}^{n-1} \frac{\k_2}{\k_p-\k_1}(\nabla_s h_{1p})^2-2\sum_{p=3}^{n-1} \frac{\k_1}{\k_p-\k_2}(\nabla_s h_{2p})^2,
\end{align*}
where the terms on the last line vanishes if the denominators vanish, since the corresponding component of $\nabla h$ vanishes in that case. This gives
\begin{equation}\label{9.5}
\begin{split}
R_1 \geq &-\frac{2}{H^3}(\k_1|\nabla_2 H|^2+\k_2|\nabla_1 H|^2)-\frac{2}{H^2}\sum_{k=1}^{n-1}\left[ \nabla_kh_{22}\nabla_k h_{11}-(\nabla_k h_{12})^2\right]\\
&+\frac{2}{H^2}\sum_{k=1}^{n-1}\sum_{p=3}^{n-1}\left[\frac{\k_2}{\k_p-\k_1}(\nabla_k h_{1p})^2+\frac{\k_1}{\k_p-\k_2}(\nabla_k h_{2p})^2 \right].
\end{split}
\end{equation}
From $\nabla_i G=0$, we have
\begin{align}\label{9.6}
\k_2 \nabla_i h_{11}+\k_1 \nabla_i h_{22}=0, \quad i=1,\cdots,n-1.
\end{align}
Then we get
\begin{align*}
\nabla_k h_{22}\nabla_k h_{11}-(\nabla_k h_{12})^2=-\frac{\k_2}{\k_1}(\nabla_k h_{11})^2-(\nabla_k h_{12})^2 \leq 0.
\end{align*}
So the second term in RHS of (\ref{9.5}) is nonnegative. The third term in RHS of (\ref{9.5}) is also nonnegative. In order to show $R_1\geq 0$, it suffices to show
\begin{align}\label{9.7}
-\frac{2\k_2}{H^3}|\nabla_1 H|^2-\frac{2}{H^2}(\nabla_1 h_{22}\nabla_1 h_{11}-(\nabla_1 h_{22})^2)+\frac{2}{H^2}\sum_{p=3}^{n-1}\frac{\k_2}{\k_p-\k_1}(\nabla_1 h_{pp})^2\geq 0,
\end{align}
and
\begin{align}\label{9.8}
-\frac{2\k_1}{H^3}|\nabla_2 H|^2-\frac{2}{H^2}(\nabla_2 h_{22}\nabla_2 h_{11}-(\nabla_2 h_{11})^2)+\frac{2}{H^2}\sum_{p=3}^{n-1}\frac{\k_1}{\k_p-\k_2}(\nabla_2 h_{pp})^2\geq 0.
\end{align}
Here we only prove (\ref{9.7}), and (\ref{9.8}) can be proved similarly. If $n-1=2$, then the third term in LHS of (\ref{9.7}) vanishes, and hence
\begin{align*}
&-\frac{2\k_2}{H^3}|\nabla_1 H|^2-\frac{2}{H^2}(\nabla_1 h_{22}\nabla_1 h_{11}-(\nabla_1 h_{22})^2)\\
\geq &-\frac{2\k_2}{H^3}(\nabla_1 h_{11}+\nabla_1 h_{22})^2-\frac{2}{H^2}(\nabla_1 h_{22}\nabla_1 h_{11}-(\nabla_1 h_{22})^2) \\
=& -\frac{2\k_2}{H^3}(\nabla_1 h_{11})^2\frac{(\k_2-\k_1)^2}{\k_1^2}+\frac{2}{H^2}(\nabla_1 h_{11})^2\frac{\k_2(\k_1+\k_2)}{\k_1^2} \\
=& \frac{2\k_2}{H^3}(\nabla_1 h_{11})^2\frac{(\k_2+\k_1)^2-(\k_2-\k_1)^2}{\k_1^2} \geq 0.
\end{align*}
If $n-1\geq 3$, let $\hat{H}:=H-\k_1-\k_2$, then we have $\hat{H}\geq (n-3)\k_1>0$. By the Cauchy-Schwarz inequality,
\begin{align*}
 &\sum_{p=3}^{n-1}\frac{(\nabla_1 h_{pp})^2}{\k_p-\k_1} \cdot \(\hat{H}-(n-3)\k_1\)\\
=&\sum_{p=3}^{n-1}\frac{(\nabla_1 h_{pp})^2}{\k_p-\k_1} \cdot \sum_{p=3}^{n-1}(\k_p-\k_1) \\
\geq &\(\sum_{p=3}^{n}|\nabla_1 h_{pp}|\)^2 \geq |\nabla_1 \hat{H}|^2,
\end{align*}
we have
\begin{align}\label{9.9}
\sum_{p=3}^{n}\frac{(\nabla_1 h_{pp})^2}{\k_p-\k_1}\geq &\frac{|\nabla_1 \hat{H}|^2}{\hat{H}-(n-3)\k_1}.
\end{align}
It follows from (\ref{9.6}) and (\ref{9.9}) that
\begin{equation}\label{9.10}
\begin{split}
&-\frac{2\k_2}{H^3}|\nabla_1 H|^2-\frac{2}{H^2}(\nabla_1 h_{22}\nabla_1 h_{11}-(\nabla_1 h_{22})^2)+\frac{2}{H^2}\sum_{p=3}^{n-1}\frac{\k_2}{\k_p-\k_1}(\nabla_1 h_{pp})^2\\
\geq &\frac{2\k_2}{H^2}\left[-\frac{1}{H}\(\frac{\k_1-\k_2}{\k_1}\nabla_1 h_{11}+\nabla_1 \hat{H}\)^2+\frac{\k_1+\k_2}{\k_1^2}(\nabla_1 h_{11})^2+\frac{|\nabla_1 \hat{H}|^2}{\hat{H}-(n-3)\k_1}\right] \\
= &\frac{2\k_2}{H^2} \left[ \(-\frac{(\k_1-\k_2)^2}{\k_1^2 H}(1+\e)+\frac{\k_1+\k_2}{\k_1^2}\)(\nabla_1 h_{11})^2\right. \\
 &\left.+\(-\frac{1+\e^{-1}}{H}+\frac{1}{\hat{H}-(n-3)\k_1}\)|\nabla_1 \hat{H}|^2 \right],
\end{split}
\end{equation}
where we have used the inequality
$$
(a+b)^2 \leq (1+\e) a^2+(1+\e^{-1}) b^2, \quad \text{for any $\e>0$}.
$$
If $\k_1=\k_2$, then (\ref{9.7}) follows from
\begin{align*}
 &-\frac{2\k_2}{H^3}|\nabla_1 H|^2-\frac{2}{H^2}(\nabla_1 h_{22}\nabla_1 h_{11}-(\nabla_1 h_{22})^2)+\frac{2}{H^2}\sum_{p=3}^{n-1}\frac{\k_2}{\k_p-\k_1}(\nabla_1 h_{pp})^2 \\
\geq &\frac{2\k_1}{H^2}\(-\frac{1}{H}+\frac{1}{\hat{H}-(n-3)\k_1}\)|\nabla_1 \hat{H}|^2\\
= &\frac{2(n-1)\k_1^2}{H^3(\hat{H}-(n-3)\k_1)}|\nabla_1 \hat{H}|^2\geq 0.
\end{align*}
If $\k_1<\k_2$, we take $\e=\frac{\hat{H}(\k_1+\k_2)+4\k_1\k_2}{(\k_2-\k_1)^2}$. Then we have
\begin{align*}
-\frac{(\k_1-\k_2)^2}{\k_1^2 H}(1+\e)+\frac{\k_1+\k_2}{\k_1^2}=0,
\end{align*}
and
\begin{align*}
 &-\frac{1+\e^{-1}}{H}+\frac{1}{\hat{H}-(n-3)\k_1}\\
=&\frac{(n-2)\k_1+\k_2}{H(\hat{H}-(n-3)\k_1)}-\frac{(\k_2-\k_1)^2}{H(\hat{H}(\k_1+\k_2)+4\k_1\k_2)}\\
=&\frac{((n-2)\k_1+\k_2)(\hat{H}(\k_1+\k_2)+4\k_1\k_2)-(\k_2-\k_1)^2(\hat{H}-(n-3)\k_1)}{H(\hat{H}-(n-3)\k_1)(\hat{H}(\k_1+\k_2)+4\k_1\k_2)}\\
=&\frac{\left[((n-2)\k_1+\k_2)(\k_1+\k_2)-(\k_2-\k_1)^2\right]\hat{H}}{H(\hat{H}-(n-3)\k_1)(\hat{H}(\k_1+\k_2)+4\k_1\k_2)}\\
 &+\frac{4\k_1\k_2((n-2)\k_1+\k_2)+(n-3)\k_1(\k_2-\k_1)^2}{H(\hat{H}-(n-3)\k_1)(\hat{H}(\k_1+\k_2)+4\k_1\k_2)}\\
\geq &0.
\end{align*}
From this, we show that (\ref{9.7}) holds. A similar argument shows that (\ref{9.8}) also holds and we obtain $R_1\geq 0$. Finally, we conclude that $\frac{\partial}{\partial t}G \geq -C G$ at a spatial minimum point, and hence the maximum principle (see Lemma 3.5 in \cite{Hamilton1986}) gives
\begin{align*}
\inf_{x\in M,\mathbb{O}\in F(M)_{(x,t)}}G(x,t,\mathbb{O}) \geq e^{-Ct} \inf_{x\in M,\mathbb{O}\in F(M)_{(x,0)}}G(x,0,\mathbb{O})\geq 0.
\end{align*}
along the IMCF, which finishes the proof of Theorem \ref{theo-3}.
\end{proof}

Combining Theorem \ref{theo-3} with the result of Gerhardt \cite{Gerhardt2011}, we obtain the following proposition.
\begin{pro}\label{pro-convergence-result}
If the initial hypersurface $\Sigma$ has nonnegative sectional curvature, then along the IMCF (\ref{9.1}) the flow hypersurface $\Sigma_t$ has nonnegative sectional curvature for $t>0$. Moreover, the hypersurfaces $\Sigma_t$ become more and more umbilical in the sense of
\begin{align}\label{9.11}
|h_i^j-\d_i^j| \leq C e^{-\frac{t}{n-1}}, \quad t>0,
\end{align}
i.e., the principal curvatures are uniformly bounded and converge exponentially fast to $1$.
\end{pro}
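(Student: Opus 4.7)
The first assertion is precisely Theorem \ref{theo-3}, which has just been proved, so there is nothing more to do there. The plan for the remainder is to verify that the hypotheses of Gerhardt's convergence theorem for IMCF in $\H^n$ (the result cited as \cite{Gerhardt2011}) are satisfied by our initial data, and then simply read off the asymptotic estimate.

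First I would observe that a hypersurface with nonnegative sectional curvature in $\H^n$ satisfies $\k_i\k_j\geq 1$ for every pair $i<j$, which forces all principal curvatures to have the same sign; after fixing the outward normal one may assume $\k_i\geq 1>0$ for every $i$ (the pinching $\k_i\k_j\geq 1$ cannot hold with any $\k_i\leq 0$). In particular $\Sigma$ is strictly convex, hence mean convex, and the result of do Carmo--Warner (or Alexander--Currier \cite{Alexander-Currier1990,Alexander-Currier1993}) implies that any convex closed hypersurface in $\H^n$ is embedded and star-shaped with respect to any interior point. Thus the initial hypersurface satisfies the star-shapedness and strict mean convexity assumptions of Gerhardt's theorem, and by Theorem \ref{theo-3} these properties are preserved along the flow.

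Next, Gerhardt's theorem provides the long-time existence of the IMCF in $\H^n$ and, after writing $\Sigma_t$ as a radial graph $\rho(t,\theta)$ over $\S^{n-1}$, gives the exponential decay estimates for the rescaled radial function and its derivatives, which translate into the Weingarten estimate
\begin{align*}
|h_i^j-\d_i^j|\leq C e^{-\frac{t}{n-1}},\quad t>0.
\end{align*}
Applying this directly yields the desired estimate (\ref{9.11}), uniform boundedness of the principal curvatures, and the exponential convergence $\k_i\to 1$.

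There is no genuine obstacle beyond checking that our weaker curvature hypothesis does fit into Gerhardt's framework; the delicate step was already carried out in Theorem \ref{theo-3}, where the pointwise invariance of the sectional-curvature pinching was proved. Once strict convexity (and hence star-shapedness and mean-convexity) is in hand, the proposition reduces to a direct citation.
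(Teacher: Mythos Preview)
Your approach is exactly what the paper does: it states the proposition as an immediate combination of Theorem \ref{theo-3} with Gerhardt's convergence result \cite{Gerhardt2011}, after noting (in the proof of Theorem \ref{theo-1}) that nonnegative sectional curvature forces strict convexity and hence star-shapedness via the Hadamard theorem. One small slip: from $\k_i\k_j\geq 1$ you cannot conclude $\k_i\geq 1$ for every $i$ (that would be h-convexity, a strictly stronger condition); you only get $\k_i>0$, which is what your parenthetical actually justifies and which is all that is needed for Gerhardt's hypotheses.
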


\section{Preserving h-convexity along IMCF}
In this section, we prove that if the initial hypersurface $\Sigma$ in hyperbolic space is h-convex, then the solution $\Sigma_t$ of the IMCF is strictly h-convex for any time $t>0$. We recall the maximum principle for tensors, which was first proved by Hamilton \cite{Hamilton1982} and was generalized by Andrews \cite{Andrews2007}.
\begin{theo}[\cite{Andrews2007}]\label{theo-4.1}
Let $S_{ij}$ be a smooth time-varying symmetric tensor field on a closed manifold $M$, satisfying
\begin{align}\label{3.1}
\frac{\partial}{\partial t} S_{ij} =a^{kl}\nabla_k \nabla_l S_{ij}+u^k \nabla_k S_{ij}+N_{ij},
\end{align}
where $a^{kl}$ and $u^k$ are smooth, $\nabla$ is a (possibly time-dependent) smooth symmetric connection, and $a^{kl}$ is positive definite everywhere. Suppose that
\begin{align}\label{3.2}
N_{ij} v^i v^j +\sup_{\L} 2 a^{kl}(2\L_k^p \nabla_l S_{ip} v^i -\L_k^p \L_l^q S_{pq}) \geq 0,
\end{align}
where $S_{ij}\geq 0$ and $S_{ij} v^j=0$. If $S_{ij}$ is positive definite everywhere on $M$ at $t=0$, then it is positive definite on $M\times [0,T]$.
\end{theo}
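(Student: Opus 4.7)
The plan is to follow Hamilton's classical strategy for tensor maximum principles, with the $\sup_\Lambda$ term in (\ref{3.2}) encoding the extra freedom in the minimum-point analysis that is absent in the scalar case. Suppose for contradiction that $S_{ij}$ first fails to be positive definite at some time $T^* \in (0, T]$; then at some $x_0 \in M$ there is a unit null vector $v \in T_{x_0} M$ with $S_{ij}(x_0, T^*) v^j = 0$ and $S_{ij}(x_0, T^*) \geq 0$. Extend $v$ to a local vector field $V$ with $V(x_0)=v$ and prescribed first derivatives $\Lambda_k^i := \nabla_k V^i|_{x_0}$ chosen freely, and set $f(x,t) := S_{ij}(x,t) V^i V^j$; by the definition of $T^*$, $(x_0,T^*)$ is a minimum of $f$ on $M\times[0,T^*]$, yielding $\partial_t f \leq 0$, $\nabla f = 0$, and $\nabla^2 f \geq 0$ there.

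Expanding by the product rule and using $S_{ij} v^j = 0$ to kill both the terms containing $\nabla^2 V$ and those of the form $S_{ij} \Lambda_k^i v^j$, one computes at $(x_0,T^*)$
\begin{align*}
a^{kl}\nabla_k\nabla_l f &= a^{kl}(\nabla_k\nabla_l S_{ij}) v^i v^j + 4 a^{kl}(\nabla_l S_{ip}) \Lambda_k^i v^p + 2 a^{kl} S_{pq} \Lambda_k^p \Lambda_l^q, \\
(\nabla_k S_{ij}) v^i v^j &= 0,
\end{align*}
where the second identity comes from $\nabla_k f = 0$ combined with $S_{ij} v^j = 0$, which also makes the drift term $u^k (\nabla_k S_{ij}) v^i v^j$ vanish. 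Substituting the PDE (\ref{3.1}) into $\partial_t f = (\partial_t S_{ij}) v^i v^j$ and invoking the two first-order inequalities $\partial_t f \leq 0$ and $a^{kl}\nabla_k \nabla_l f \geq 0$ produces, for every admissible $\Lambda$,
\begin{align*}
N_{ij} v^i v^j - 4 a^{kl}(\nabla_l S_{ip}) \Lambda_k^i v^p - 2 a^{kl} S_{pq} \Lambda_k^p \Lambda_l^q \leq 0.
\end{align*}
Replacing $\Lambda \mapsto -\Lambda$ to flip the sign of the linear term and then taking the supremum over $\Lambda$ gives the exact opposite of the hypothesis (\ref{3.2}), albeit with a non-strict inequality.

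To upgrade this to a genuine contradiction, one performs a perturbation: replace $S$ by $\tilde S_{ij} := S_{ij} + \delta e^{Ct} g_{ij}$ for small $\delta>0$ and sufficiently large $C$. The tensor $\tilde S$ satisfies a PDE of the same form with modified nonlinearity $\tilde N = N + \delta e^{Ct}(C g_{ij} + \partial_t g_{ij})$, and choosing $C$ large relative to bounds on $a^{kl}$, $u^k$ and the time-derivative of the background metric makes condition (\ref{3.2}) strict for $\tilde S$. The derivation above then yields an honest contradiction for $\tilde S$, forcing $\tilde S>0$ throughout $[0,T]$; sending $\delta \to 0$ gives $S \geq 0$, and the upgrade to $S > 0$ follows by applying the same argument to $S_{ij} - \lambda_0 e^{-Kt} g_{ij}$ with $\lambda_0 > 0$ a uniform initial lower eigenvalue bound. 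The principal obstacle is precisely this perturbation step: one must verify that the null-vector condition (\ref{3.2}) transfers to $\tilde S$ with a quantitatively strict sign, which requires tracking how $\Lambda$-dependent quantities behave under the perturbation and choosing $C$ appropriately in terms of the coefficient bounds.
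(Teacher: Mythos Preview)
The paper does not prove this theorem: it is quoted verbatim from Andrews \cite{Andrews2007} and used as a black box in the proof of Theorem~\ref{theo-4.2}. So there is no ``paper's proof'' to compare against.

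Your sketch is the standard Hamilton--Andrews argument and is essentially correct up to the point you yourself flag. The computation at the first null point is right: extending $v$ with arbitrary first derivatives $\Lambda$, the vanishing of $S_{ij}v^j$ kills the $\nabla^2 V$ terms and the $S_{ij}\Lambda_k^i v^j$ cross terms, and one arrives at the inequality that, after $\Lambda\mapsto-\Lambda$ and taking the supremum, exactly matches \eqref{3.2} with the reversed (non-strict) sign. One small wording issue: you call $a^{kl}\nabla_k\nabla_l f\ge 0$ a ``first-order inequality''; it is of course second order.

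The real content, as you note, is the perturbation. Your proposed $\tilde S=S+\delta e^{Ct}g$ is problematic in the stated generality: at a null point of $\tilde S$ one has $S\tilde v=-\delta e^{Ct}\tilde v$, so $S$ is \emph{not} nonnegative there and hypothesis \eqref{3.2}, which is only assumed when $S\ge 0$ and $Sv=0$, does not apply to produce the strict sign you want. The correct barrier goes the other way: set $\tilde S=S-\epsilon e^{-Ct}g$ with $\epsilon$ small enough that $\tilde S>0$ initially. At a first null point of $\tilde S$ one then has $S\ge \tilde S\ge 0$ and $Sv=\epsilon e^{-Ct}v$, so $S$ is still nonnegative; repeating the Hessian computation for $\tilde S$ (now $\tilde S_{ij}v^j=0$, so the same cancellations occur) yields an extra term $+\epsilon C e^{-Ct}$ on the reaction side, and the hypothesis \eqref{3.2}, applied at the nearby configuration via a continuity/compactness argument (or, in applications, verified directly with a uniform lower bound), furnishes the contradiction for $C$ large. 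This is the step that needs to be written out carefully; the rest of your outline is fine.
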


Without resorting to the constant rank theorem, here we follow the spirit of Andrews-Wei \cite{Andrews-Wei2017}, and prove that the h-convexity
is preserved along the IMCF.

\begin{theo}\label{theo-4.2}
Let $\Sigma_t$, $t\in [0,T)$ be a solution of IMCF (\ref{9.1}). If the initial hypersurface $\Sigma$ is h-convex, then the evolving hypersurface $\Sigma_t$ is strictly h-convex for $t\in (0,T)$.
\end{theo}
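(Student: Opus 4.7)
The plan is to apply Andrews' tensor maximum principle (Theorem \ref{theo-4.1}) to the symmetric tensor
\begin{align*}
S_{ij}:=h_{ij}-g_{ij},
\end{align*}
which is nonnegative definite at $t=0$ by the assumed h-convexity, and whose strict positivity for $t>0$ is exactly the strict h-convexity we want. The extra strictness will come from a strong-maximum-principle argument after the nonnegativity is preserved.

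First, I would derive the evolution of $S_{ij}$. Lowering the $j$-index in (\ref{9.2}) produces a term $\frac{2}{H}(h^2)_{ij}$ from $\partial_t g_{jk}\cdot h_i^k$ which exactly cancels the $-\frac{2}{H}(h^2)_i^j$ term in (\ref{9.2}); combining with $\frac{\partial}{\partial t}g_{ij}=\frac{2}{H}h_{ij}$ gives
\begin{align*}
\frac{\partial}{\partial t}S_{ij}=\frac{1}{H^2}\D S_{ij}-\frac{2}{H^3}\nabla_i H\nabla_j H+\(\frac{|A|^2+n-1}{H^2}-\frac{2}{H}\)(S_{ij}+g_{ij}).
\end{align*}
In the notation of Theorem \ref{theo-4.1} one takes $a^{kl}=g^{kl}/H^2$, $u^k=0$, and lets $N_{ij}$ be the sum of the two non-Laplacian terms. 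Next I verify (\ref{3.2}) at any null direction $v$ of $S$. Diagonalising $h$ with $v=e_1$, the null condition $S_{ij}v^j=0$ forces $\k_1=1$, while the inductively preserved h-convexity gives $\k_p\geq 1$ for $p\geq 2$. A direct expansion yields
\begin{align*}
|A|^2+n-1-2H=\sum_{p>1}(\k_p-1)^2,
\end{align*}
so that
\begin{align*}
N_{ij}v^iv^j=\frac{1}{H^2}\sum_{p>1}(\k_p-1)^2-\frac{2}{H^3}(\nabla_1 H)^2.
\end{align*}
The first summand is nonnegative; the heart of the argument is to absorb the negative gradient term using the $\sup_\L$ correction.

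To that end, I may assume $\nabla_k h_{11}=0$ for every $k$, since otherwise taking $\L_k^1$ large renders the sup infinite and (\ref{3.2}) is automatic. Choosing $\L_k^p=(\nabla_k h_{1p})/(\k_p-1)$ for $p>1$ (with $\k_p>1$) and $\L_k^1=0$, and retaining only the diagonal $k=p$ contributions, the sup is at least
\begin{align*}
\frac{2}{H^2}\sum_{p>1}\frac{(\nabla_p h_{1p})^2}{\k_p-1}=\frac{2}{H^2}\sum_{p>1}\frac{(\nabla_1 h_{pp})^2}{\k_p-1},
\end{align*}
where I used the Codazzi identity $\nabla_k h_{ij}=\nabla_i h_{jk}$ (totally symmetric in $\H^n$). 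Since $\nabla_1 h_{11}=0$ and $\sum_{p>1}(\k_p-1)=H-(n-1)$, Cauchy-Schwarz gives
\begin{align*}
(\nabla_1 H)^2=\(\sum_{p>1}\nabla_1 h_{pp}\)^2\leq (H-(n-1))\sum_{p>1}\frac{(\nabla_1 h_{pp})^2}{\k_p-1}.
\end{align*}
Multiplying by $2/H^3$ and using $H-(n-1)\leq H$ shows the sup term dominates $\frac{2}{H^3}(\nabla_1 H)^2$, so (\ref{3.2}) holds with nonnegative remainder $\frac{1}{H^2}\sum_{p>1}(\k_p-1)^2$.

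Theorem \ref{theo-4.1} (applied to $S_{ij}+\e g_{ij}$ and letting $\e\to 0$) then preserves $S_{ij}\geq 0$ along the flow, and the strict h-convexity for $t>0$ is obtained by a Hopf/strong-maximum-principle argument on the minimum eigenvalue of $S$: the reaction inequality above is strict unless every $\k_p=1$ at that point, and the tensor strong maximum principle then forces $S\equiv 0$, i.e., $\Sigma_t$ totally h-umbilical, which by combining with the fact that $\Sigma_t$ is closed in $\H^n$ means $\Sigma_t$ is a geodesic sphere (where strict h-convexity is automatic for any $t\geq 0$). The main obstacle is the $\sup_\L$ bookkeeping and, closely related, the higher-multiplicity case where $\k_p=1$ for some $p>1$: this is handled by restricting the $\L$-sum to indices with $\k_p>1$ and invoking a continuity/perturbation limit, since both the numerator $(\nabla_1 h_{pp})^2$ and the structural weight $\k_p-1$ vanish jointly as $\k_p\to 1$.
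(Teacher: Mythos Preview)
Your argument is essentially the paper's: apply Andrews' tensor maximum principle to $S=h-g$, absorb the bad gradient term $-\tfrac{2}{H^3}|\nabla_1 H|^2$ via the $\sup_\L$ correction using Codazzi and Cauchy--Schwarz, then upgrade to strict positivity with the strong maximum principle. The paper works with the $(1,1)$-tensor $h_i^j-\d_i^j$ (so its evolution retains the $-\tfrac{2}{H}(h^2)$ term) and applies Cauchy--Schwarz with weights $\k_p$, comparing $1/(\k_p-1)\ge 1/\k_p$; your use of the $(0,2)$-tensor and weights $\k_p-1$ directly is a clean variant that yields the same inequality.

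One point to tighten is the strictness step. The tensor strong maximum principle does not directly force $S\equiv 0$; it gives that the null space of $S$ is invariant under parallel transport, so the smallest eigenvalue equals $1$ \emph{everywhere} on $\Sigma_{t_0}$. Moreover, $S\equiv 0$ would mean $\k_i\equiv 1$, which is a horosphere, not a geodesic sphere. The paper finishes by observing that any closed hypersurface in $\H^n$ has a point where all $\k_i>1$ (e.g.\ the point of maximal distance to a fixed origin), contradicting $\k_1\equiv 1$. Your ``strict reaction unless all $\k_p=1$'' observation is correct but does not by itself rule out a null direction when the initial data is only weakly h-convex; you still need the parallel null-space argument (or a perturbation $S+\e e^{-Ct}g$) combined with the existence of a strictly h-convex point to close the contradiction.
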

\begin{proof}
We show that the h-convexity is preserved along the IMCF, and it becomes strictly h-convex for $t>0$. We take $S_{ij}:=h_{i}^j-\d_i^j$. Then the $h$-convexity is equivalent to $S_{ij}\geq 0$. By (\ref{9.2}), the tensor $S_{ij}$ evolves by
\begin{equation}\label{Sij-evolution}
\begin{split}
\frac{\partial}{\partial t}S_{ij}=&\frac{1}{H^2}\D S_{ij}-\frac{2}{H^3}\nabla_i H  \nabla^j H+(|A|^2+n-1)S_{ij} \\
&-\frac{2}{H}(S_{ik}S_{kj}+2S_{ij})+\frac{1}{H^2}(|A|^2+(n-1)-2H)\d_{i}^{j}.
\end{split}
\end{equation}
To apply the tensor maximum principle, we need to show that (\ref{3.2}) holds provided that $S_{ij}\geq 0$ and $S_{ij}v^{j}=0$. Let $(x_0,t_0)$ be the point where $S_{ij}$ has a null vector $v$. By continuity, we can assume that $h_i^j$ has all eigenvalues distinct and in increasing order at $(x_0,t_0)$, that is $\k_{n-1}>\k_{n-2}>\cdots>\k_1$. The null eigenvector condition $S_{ij}v^j=0$ implies that $v=e_1$ and $S_{11}=\k_1-1=0$ at $(x_0,t_0)$. The terms in (\ref{Sij-evolution}) which contains $S_{ij}$ and $S_{ik}S_{kj}$ satisfies the null vector condition. For the last term in (\ref{Sij-evolution}) we have
\begin{align*}
\frac{1}{H^2}(|A|^2+(n-1)-2H) \geq \frac{1}{H^2}\(\frac{H^2}{n-1}+(n-1)-2H\)\geq 0.
\end{align*}
Thus, it remains to show that
\begin{align*}
Q_1:=-\frac{2}{H^3}|\nabla_1 H|^2+2\sup_{\L} \frac{\d^k_l}{H^2}(2\L_k^p\nabla_l S_{1p}-\L_k^p\L_l^q S_{pq}) \geq 0.
\end{align*}
Note that $S_{11}=0$ and $\nabla_k S_{11}=0$ at $(x_0,t_0)$, the supremum over $\L$ can be explicitly computed as follows.
\begin{align*}
 &\frac{2\d^k_l}{H^2}(2\L_k^p\nabla_l S_{1p}-\L_k^p\L_l^q S_{pq})\\
=&\frac{2}{H^2}\sum_{k=1}^{n-1}\sum_{p=2}^{n-1} (2\L_k^p \nabla_k S_{1p}-(\L_k^p)^2 S_{pp}) \\
=&\frac{2}{H^2}\sum_{k=1}^{n-1}\sum_{p=2}^{n-1}\left[\frac{(\nabla_k S_{1p})^2}{S_{pp}}-\(\L_k^p-\frac{\nabla_k S_{1p}}{S_{pp}}\)^2 S_{pp}\right].
\end{align*}
Thus the supremum is obtained by taking $\L_k^p=\frac{\nabla_k S_{1p}}{S_{pp}}$. The required inequality for $Q_1$ becomes
\begin{align*}
Q_1=-\frac{2}{H^3}|\nabla_1 H|^2+\frac{2}{H^2}\sum_{k=1}^{n-1}\sum_{p=2}^{n-1}\frac{(\nabla_k S_{1p})^2}{S_{pp}}\geq 0.
\end{align*}
By the Codazzi equation we have $\nabla_1 S_{1p}=\nabla_1 h_{1p}=\nabla_p h_{11}=0$ at $(x_0,t_0)$, we have
\begin{align*}
\sum_{k>1} \frac{1}{\k_k} (\nabla_1 h_{kk})^2 \cdot \sum_{k>1}\k_k \geq \(\sum_{k>1}|\nabla_1 h_{kk}|\)^2 \geq \left|\nabla_1 \sum_{k>1}h_{kk}\right|^2=|\nabla_1 H|^2,
\end{align*}
which gives $\frac{|\nabla_1 H|^2}{H} \leq \sum_{k>1} \frac{1}{\k_k} (\nabla_1 h_{kk})^2$. Hence, we get
\begin{align*}
Q_1=&-\frac{2}{H^3}|\nabla_1 H|^2+\frac{2}{H^2}\sum_{k>1,l>1}\frac{1}{\k_l-1}(\nabla_1 h_{kl})^2\\
\geq &-\frac{2}{H^2}\sum_{k>1}\frac{1}{\k_k}(\nabla_1 h_{kk})^2+\frac{2}{H^2}\sum_{k>1,l>1}\frac{1}{\k_l-1}(\nabla_1 h_{kl})^2 \\
\geq &\frac{2}{H^2}\sum_{k>1,l>1}\(\frac{1}{\k_l-1}-\frac{1}{\k_l}\)(\nabla_1 h_{kl})^2 \geq 0.
\end{align*}
Thus, the Andrews' maximum principle (Theorem \ref{theo-4.1}) implies that the $h$-convexity is preserved along the IMCF.

Finally, we show that $\Sigma_t$ is strictly h-convex for $t>0$. If this is not true, then there exists some interior point $(x_0,t_0)$ such that the smallest principal curvature is $1$. By the strong maximum principle, there exists a parallel vector field $v$ such that $S_{ij}v^iv^j=0$ on $\Sigma_{t_0}$. Then the smallest principal curvature is $1$ on $\Sigma_{t_0}$ everywhere. This contradicts with the fact that on any closed hypersurface in $\H^{n}$, there exists at least one point where all the principal curvatures are strictly larger than one. This completes the proof.
\end{proof}

\section{Monotonicity formula}
In this section, we prove the monotonicity of functional $Q_k$ along the IMCF. Let
$$
\~L_k=\sum_{i=0}^{k}C_k^i(-1)^ip_{2k-2i}=\frac{1}{(2k)!C_{n-1}^{2k}}L_k, \quad \~N_k=\sum_{i=0}^{k}C_k^i(-1)^ip_{2k+1-2i}.
$$
The variational formula for $\int_{\Sigma}\~L_k$ is the following.
\begin{lem}\label{lem-monotonicity}
\begin{align}\label{4.1}
\frac{d}{dt}\int_{\Sigma}\~L_k =\frac{n-1-2k}{n-1}\int_{\Sigma}\~L_k +\frac{n-1-2k}{n-1}\int_{\Sigma}\(\frac{\~N_k}{p_1}-\~L_k\).
\end{align}
\end{lem}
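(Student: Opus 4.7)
The plan is to derive the formula directly from the Reilly-type variational identity (\ref{2.12}) by expanding $\tilde{L}_k$ using its definition as an alternating combination of the normalized mean curvatures and then collapsing the resulting sum through an index shift.

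First I would apply (\ref{2.12}) term by term. Since $\tilde{L}_k=\sum_{i=0}^{k}C_k^i(-1)^i p_{2k-2i}$, writing $j=2k-2i$ we get
\begin{align*}
\frac{d}{dt}\int_{\Sigma}\tilde{L}_k =\frac{1}{n-1}\int_{\Sigma}\frac{1}{p_1}\sum_{i=0}^{k}C_k^i(-1)^i\Bigl[(n-1-2k+2i)\,p_{2k-2i+1}+(2k-2i)\,p_{2k-2i-1}\Bigr].
\end{align*}
Call the two inner sums $A$ (the $p_{2k-2i+1}$ terms) and $B$ (the $p_{2k-2i-1}$ terms). The $A$-sum already involves the odd-indexed curvatures $p_{2k-2i+1}$ that appear in $\tilde{N}_k$, so the task is to rewrite $B$ in the same form.

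Next I would perform the reindexing $i\mapsto i-1$ in $B$, noting that the $i=k$ term of $B$ vanishes because of the factor $2k-2i$. This yields
\begin{align*}
B=\sum_{i=1}^{k}C_k^{i-1}(-1)^{i-1}(2k-2i+2)\,p_{2k-2i+1}.
\end{align*}
Using the binomial identity $(k-i+1)C_k^{i-1}=i\,C_k^{i}$, which gives $(2k-2i+2)C_k^{i-1}=2iC_k^{i}$, this becomes $B=-\sum_{i=0}^{k}2i\,C_k^{i}(-1)^{i}p_{2k-2i+1}$, where the $i=0$ term is added freely. Adding $A$ and $B$ the brackets collapse since $(n-1-2k+2i)-2i=n-1-2k$ is independent of $i$, and hence
\begin{align*}
A+B=(n-1-2k)\sum_{i=0}^{k}C_k^i(-1)^i p_{2k-2i+1}=(n-1-2k)\,\tilde{N}_k.
\end{align*}
Substituting back produces $\frac{d}{dt}\int_{\Sigma}\tilde{L}_k=\frac{n-1-2k}{n-1}\int_{\Sigma}\frac{\tilde{N}_k}{p_1}$, and rewriting the right-hand side as $\frac{n-1-2k}{n-1}\int_{\Sigma}\tilde{L}_k+\frac{n-1-2k}{n-1}\int_{\Sigma}\bigl(\tfrac{\tilde{N}_k}{p_1}-\tilde{L}_k\bigr)$ yields the form stated in the lemma.

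The proof is essentially a bookkeeping argument — there is no real analytic obstacle, only the combinatorial task of lining up the two sums. The one step requiring care is the index shift and the binomial identity that turns the coefficient $(2k-2i+2)C_k^{i-1}$ into $2i\,C_k^i$; once this is in place the alternating sums telescope to $(n-1-2k)\tilde{N}_k$ by inspection. Splitting the resulting expression into the two pieces displayed in the statement is cosmetic and is presumably arranged this way because the second integral will vanish at equilibria (when $\tilde{N}_k=p_1\tilde{L}_k$, which characterizes umbilical configurations), leaving a clean monotonicity statement driven by the first term.
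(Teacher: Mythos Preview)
Your proof is correct and follows essentially the same approach as the paper: apply (\ref{2.12}) termwise, reindex the $p_{2k-2i-1}$ sum via $i\mapsto i-1$, and use the binomial identity $2iC_k^i=(2k-2i+2)C_k^{i-1}$ to collapse everything to $(n-1-2k)\tilde{N}_k$. The only cosmetic difference is that the paper first peels off the constant $(n-1-2k)$ from the coefficient $(n-1-2k+2i)$ and then exhibits the residual sum $\sum_{i=1}^{k}(-1)^i\bigl[2iC_k^i-(2k-2i+2)C_k^{i-1}\bigr]p_{2k-2i+1}$ as identically zero, whereas you combine $A$ and $B$ directly; the underlying manipulation is identical.
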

\begin{proof}
It follows from (\ref{2.12}) that along the IMCF, we have
\begin{align*}
  & \frac{d}{dt}\int_{\Sigma} \~L_k \\
= & \int_{\Sigma} \sum_{i=0}^k C_k^i(-1)^i\left[ (n-1-2k+2i)p_{2k-2i+1}+(2k-2i)p_{2k-2i-1}\right]\frac{1}{(n-1)p_1}\\
= & \frac{n-1-2k}{n-1}\int_{\Sigma} \frac{1}{p_1}\sum_{i=0}^k C_k^i (-1)^i p_{2k-2i+1} \\
 &+\frac{1}{n-1}\int_{\Sigma} \frac{1}{p_1}\sum_{i=1}^k(-1)^i \left[ C_k^i (2i)-C_k^{i-1} (2k-2i+2)\right]p_{2k-2i+1} \\
=& \frac{n-1-2k}{n-1}\int_{\Sigma} \frac{\~N_k}{p_1} \\
=& \frac{n-1-2k}{n-1}\int_{\Sigma}\~L_k +\frac{n-1-2k}{n-1}\int_{\Sigma}\(\frac{\~N_k}{p_1}-\~L_k\).
\end{align*}
\end{proof}

To show that the monotonicity of the functional $Q_k(t)$ along the IMCF, we need to show that
$$
\frac{\~N_k}{p_1}-\~L_k \leq 0.
$$
For this purpose, we need the following lemma. Define the cone
$$
\G:=\{\l\in\R^{n-1}~|~\l_i\l_j \geq 1, \forall i\neq j\},
$$
then it is easy to see the cone $\{\l\in\R^{n-1}~|~\l_i\geq 1\}$ is strictly contained in $\G$.
\begin{lem}\label{lem-key-inequality}
For any $\k\in \G$, we have
\begin{align}\label{4.2}
\~N_k - p_1 \~L_k \leq 0.
\end{align}
The equality holds if and only if one of the following two cases holds:
\begin{enumerate}[(i)]
\item $\k_i=\k_j$ for all i,j;
\item if $k\geq 2$, there exists one $i$ with $\k_i>1$ and $\k_j=1$ for all $j\neq i$.
\end{enumerate}
\end{lem}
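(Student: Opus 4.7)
The plan is to rewrite $p_1\tilde L_k - \tilde N_k$ in a pair-extracted form suited to induction on $k$, combining Newton-MacLaurin at the base level with a self-bootstrapping telescoping identity. Introduce the Newton differences $a_j := p_1 p_j - p_{j+1}$, which are nonnegative on the positive part of $\Gamma$ by (\ref{2.2}). A direct expansion gives
$$p_1\tilde L_k - \tilde N_k = \sum_{i=0}^k (-1)^i \binom{k}{i}\, a_{2k-2i}.$$
For $k=0$ this is $0$; for $k=1$ it equals $a_2 = p_1 p_2 - p_3 \ge 0$, which is Newton-MacLaurin directly. These supply the base of the induction.

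For the inductive step, I would use the combinatorial representation
$$\tilde L_k = \frac{1}{A_k}\sum_{\substack{|I|=2k \\ M\text{ matches } I}}\prod_{\{a,b\}\in M}(\kappa_a\kappa_b - 1), \qquad A_k := \binom{n-1}{2k}(2k-1)!!,$$
with the parallel formula for $\tilde N_k$ carrying an extra unmatched vertex $v$; both are manifestly nonnegative on $\Gamma$. Pulling one pair $P = \{x,y\}$ out of each matching yields the key identity
$$(n-1)A_k\bigl(p_1\tilde L_k - \tilde N_k\bigr) = \sum_{P = \{x,y\}}(\kappa_x\kappa_y - 1)\, A_{k-1}^{(P)}\Bigl[(\kappa_x+\kappa_y)\,\tilde L_{k-1}^{(P)} - 2\,\tilde N_{k-1}^{(P)}\Bigr],$$
where the superscript $(P)$ denotes the restriction of $\kappa$ to $\{1,\dots,n-1\}\setminus P$, which still lies in the analogous cone. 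Applying the inductive hypothesis $\tilde N_{k-1}^{(P)} \le p_1^{(P)}\tilde L_{k-1}^{(P)}$ inside the bracket, substituting $p_1^{(P)} = (\sigma_1 - \kappa_x - \kappa_y)/(n-3)$, and re-expressing the two resulting sums via parallel pair-extraction identities (which identify $\sum_P(\kappa_x+\kappa_y)(\kappa_x\kappa_y-1)A_{k-1}^{(P)}\tilde L_{k-1}^{(P)}$ with $\sum_l\kappa_l\sum_{I\ni l, M}\prod(\kappa_a\kappa_b-1)$ and $\sum_P(\kappa_x\kappa_y-1)A_{k-1}^{(P)}\tilde L_{k-1}^{(P)}$ with $kA_k\tilde L_k$), the bound telescopes after substituting $\sigma_1 = (n-1)p_1$ to
$$(n-1)A_k(p_1\tilde L_k - \tilde N_k) \ge \frac{(n-1)(n-1-2k)}{n-3}\,A_k(p_1\tilde L_k - \tilde N_k),$$
which rearranges to $\frac{2(n-1)(k-1)}{n-3}\,A_k(p_1\tilde L_k - \tilde N_k) \ge 0$ and yields the conclusion for $k \ge 2$ with $n - 3 > 0$.

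For the equality characterization, Case (i) ($\kappa_i = c$ constant) makes every $a_j$ vanish, so both sides are $0$. Case (ii) with $k \ge 2$ has $p_j = 1 + j(c-1)/(n-1)$ linear in $j$, whence $a_{2j}$ is linear in $j$ and its $k$-th alternating sum vanishes for $k \ge 2$; combinatorially, every matching of $2k \ge 4$ indices must include a pair of unit curvatures, annihilating each summand of both $\tilde L_k$ and $\tilde N_k$. The converse comes from tracing equalities through the inductive chain: equality forces each application of the inductive hypothesis to be tight and $\kappa$ to collapse into one of the two configurations. \textbf{The main obstacle} is executing the pair-extraction identity and the telescoping calculation without sign or normalization errors, and handling the boundary cases $n - 3 = 0$ and $k = 1$ (where the self-bootstrapping multiplier $2(k-1)/(n-3)$ degenerates) separately via the base of the induction.
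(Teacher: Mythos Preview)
Your route to the inequality is correct and genuinely different from the paper's. I checked your pair-extraction identity: with $A_{k-1}^{(P)}\tilde L_{k-1}^{(P)}$ the matching sum on $[n-1]\setminus P$, one has
\[
\sum_{P}(\kappa_x\kappa_y-1)(\kappa_x+\kappa_y)A_{k-1}^{(P)}\tilde L_{k-1}^{(P)}=\sigma_1 A_k\tilde L_k - B_k\tilde N_k,\qquad
\sum_{P}(\kappa_x\kappa_y-1)A_{k-1}^{(P)}\tilde N_{k-1}^{(P)}=kA_k\tilde N_k,
\]
using $B_k=(n-1-2k)A_k$; these give exactly your identity, and the telescoping after the inductive bound indeed collapses to $\tfrac{2(n-1)(k-1)}{n-3}A_k(p_1\tilde L_k-\tilde N_k)\ge 0$. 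Two caveats you should make explicit: the induction is on $k$ \emph{uniformly in the dimension}, since $\kappa^{(P)}\in\R^{n-3}$; and the degenerate denominators ($n-3\le 0$) are absorbed by the base case because $2k\le n-1$ forces $k\le 1$ there.

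The paper, by contrast, quotes a single identity of Ge--Wang--Wu (their Lemma 4.3) expressing $p_1\tilde L_k-\tilde N_k$, up to a positive constant, as
\[
\sum \kappa_{i_1}(\kappa_{i_2}\kappa_{i_3}-1)\cdots(\kappa_{i_{2k-2}}\kappa_{i_{2k-1}}-1)(\kappa_{i_{2k}}-\kappa_{i_{2k+1}})^2,
\]
a manifestly nonnegative sum on $\Gamma$. Your argument trades that nontrivial closed-form identity for a simpler recursion plus an inequality step; the price is that you lose the explicit term-by-term structure.

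That price is felt in the equality analysis, which is the weak point of your proposal. The paper spends most of its proof here: from the vanishing of every summand above, one shows $\kappa_1\ge 1$ (ruling out the possibility $\kappa_1<1<\kappa_2=\cdots=\kappa_{n-1}$, which \emph{is} allowed in $\Gamma$) and then runs an inner induction to force $\kappa_1=\cdots=\kappa_{n-2}=1$. Your sentence ``tracing equalities through the inductive chain'' hides real work: equality in your bootstrap only says that $\kappa^{(P)}$ satisfies the $(k-1)$-equality for every pair $P$ with $\kappa_x\kappa_y>1$, and from there you must still reconstruct the global dichotomy (i)/(ii) while handling the boundary pairs with $\kappa_x\kappa_y=1$. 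This is doable but is a separate combinatorial argument of comparable length to the paper's, and you have not supplied it. Your verification that (i) and (ii) give equality (via $p_j=1+j(c-1)/(n-1)$ and the vanishing second finite difference) is fine.
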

\begin{proof}
The proof relies on a crucial observation due to Ge-Wang-Wu (see Lemma 4.3 in \cite{Ge-Wang-Wu2014}): the inequality (\ref{4.2}) is equivalent to the following inequality:
\begin{align}\label{4.3}
\sum_{\substack{1\leq i_m \leq n-1\\ i_{p}\neq i_{q}(p\neq q)}}\k_{i_1}(\k_{i_2}\k_{i_3}-1)(\k_{i_4}\k_{i_5}-1)\cdots (\k_{i_{2k-2}}\k_{i_{2k-1}}-1)(\k_{i_{2k}}-\k_{i_{2k+1}})^2 \geq 0,
\end{align}
where the summation takes over all $(2k+1)$-elements permutation of $\{1,2,\cdots,n-1\}$. For $\k\in \G$, we have $\k_i>0$ for all $i$ and
\begin{align*}
\k_{i_p}\k_{i_q}-1 \geq 0,\quad (\k_{i_{p}}-\k_{i_{q}})^2\geq 0,
\end{align*}
for all distinct $p,q$. It follows that if $\k \in \G$ then (\ref{4.3}) holds, and hence the inequality (\ref{4.2}) holds. Now we analyze the equality case in (\ref{4.3}). If $k=1$, then the equality in (\ref{4.3}) reduces to
\begin{align*}
\sum_{\substack{1\leq i_m \leq n-1\\ i_{p}\neq i_{q}(p\neq q)}}\k_{i_1}(\k_{i_{2}}-\k_{i_{3}})^2 = 0.
\end{align*}
Since $\k_i>0$ for all $i$, we get $\k_i=\k_j$ for all $i,j$. If $k\geq 2$, then the equality in (\ref{4.3}) implies that one of the following two cases holds:
\begin{enumerate}[($i'$)]
\item  $\k_i=\k_j$ for all $i,j$;
\item  There exists at least one $\k_i$ which is distinct with another $\k_j$, $j\neq i$.
\end{enumerate}
Without loss of generality, we may assume that $\k_1 \leq \cdots \leq \k_{n-1}$. We claim that $\k_1\geq 1$. If not, then it follows from $\k\in \G$ that
\begin{align*}
0<\k_1<1<\frac{1}{\k_1}\leq \k_2 \leq \cdots \leq \k_{n-1}.
\end{align*}
Then we have $\k_i\k_j >1$ for all distinct $i,j\in \{2,\cdots,n-1\}$. We have
\begin{align*}
\k_1(\k_{i_2}\k_{i_3}-1)(\k_{i_4}\k_{i_5}-1)\cdots (\k_{i_{2k-2}}\k_{i_{2k-1}}-1)(\k_{i_{2k}}-\k_{i_{2k+1}})^2=0,
\end{align*}
where $(i_2,\cdots,i_{2k+1})$ is taken over all $2k$-elements permutation of $\{2,\cdots,n-1\}$. This implies that $\k_i=\k_j$ for all $i,j\in \{2,\cdots,n-1\}$. We also have
\begin{align*}
\k_{i_1}(\k_{i_2}\k_{i_3}-1)(\k_{i_4}\k_{i_5}-1)\cdots (\k_{i_{2k-2}}\k_{i_{2k-1}}-1)(\k_2-\k_1)^2=0,
\end{align*}
where $(i_1,\cdots,i_{2k-1})$ is taken over all $(2k-1)$-elements permutation of $\{3,\cdots,n-1\}$. Together with $\k_i=\k_j$ for all $i,j\in \{2,\cdots,n-1\}$, we get $\k_2=\cdots=\k_{n-1}=1$. It follows that $\k_1 \k_2=\k_1<1$, which contradicts with $\k\in \G$. Therefore, we have $\k_1\geq 1$.

Now we claim that if ($ii'$) holds, then
\begin{align}\label{4.4}
1=\k_1=\cdots=\k_{n-2}<\k_{n-1}.
\end{align}
To show (\ref{4.4}), without loss of generality we may assume $\k_i=\k_{n-1}$ in ($ii'$), then $\k_1<\k_{n-1}$. Together with $\k_1\k_{n-1}\geq 1$, we have $1<\k_{n-1}$. We have
\begin{align}\label{4.5}
(\k_{i_2}\k_{i_3}-1)(\k_{i_4}\k_{i_5}-1)\cdots (\k_{i_{2k-2}}\k_{i_{2k-1}}-1)(\k_{i_{2k}}-\k_{i_{2k+1}})^2=0,
\end{align}
where $(i_2,\cdots,i_{2k+1})$ is taken over all $(2k)$-elements permutation of $\{1,\cdots,n-1\}$. Now we prove (\ref{4.4}) by induction.
If $\k_1>1$, then $\k_i\k_j>1$ for all $i,j\in \{1,\cdots,n-1\}$. If we take $i_{2k+1}=n-1$, then it follows from (\ref{4.5}) that
$$
1<\k_1<\k_2=\k_3=\cdots=\k_{n-1}.
$$
However, in this case we have
\begin{align*}
(\k_{i_2}\k_{i_3}-1)(\k_{i_4}\k_{i_5}-1)\cdots (\k_{i_{2k-2}}\k_{i_{2k-1}}-1)(\k_{n-1}-\k_{1})^2>0,
\end{align*}
where $(i_2,\cdots,i_{2k-1})$ is taken over all $(2k-2)$-elements permutation of $\{2,\cdots,n-2\}$. This contradicts with (\ref{4.5}), so we have $\k_1=1$. Assume that we have proved
$$
1=\k_1=\cdots=\k_{j} \leq \k_{j+1}\leq \cdots \leq \k_{n-1},
$$
and we need to show that $\k_{j+1}=1$. If $\k_{j+1}>1$, then we take $i_{2m}=m$, where $m=1,\cdots,j$ and $i_{2k+1}=n-1$. Let $(i_3,i_5,\cdots,i_{2j+1},i_{2j+2},\cdots,i_{2k})$ is taken over all $(2k-1+j)$-elements permutation of $\{j+1,j+2,\cdots,n-2\}$. As we have $\k_{i_{2m}}\k_{i_{2m+1}}\geq \k_{1} \k_{j+1}>1$ for $m=1,\cdots,j$, it follows from (\ref{4.5}) that
$$
1<\k_1=\cdots=\k_j<\k_{j+1}=\cdots=\k_{n-1}.
$$
However, in this case we may take $i_{2m}=m+1$, where $m=1,\cdots,j-1$, $i_{2k}=1$ and $i_{2k+1}=n-1$. Let $(i_3,i_5,\cdots,i_{2j-1},i_{2j},i_{2j+1},\cdots,i_{2k-1})$ is taken over all $(2k-1+j)$-elements permutation of $\{j+1,j+2,\cdots,n-2\}$, then we have $\k_{i_{2m}}\k_{i_{2m+1}}\geq \k_{2} \k_{j+1}>1$ for $m=1,\cdots,j-1$ and $\k_{i_{2m}}\k_{2m+1}>1$ for $m=j,\cdots,k-1$. Thus we have
\begin{align*}
(\k_{i_2}\k_{i_3}-1)(\k_{i_4}\k_{i_5}-1)\cdots (\k_{i_{2k-2}}\k_{i_{2k-1}}-1)(\k_{n-1}-\k_{1})^2>0,
\end{align*}
which contradicts with (\ref{4.5}). Hence we conclude that $\k_{j+1}=1$. Induction on $j$ then verifies (\ref{4.4}), which completes the proof.
\end{proof}
\begin{rem}
For $\k\in \G$, either (i) or (ii) implies that $\k\in \{\l\in\R^{n-1}~|~\l_i\geq 1\}$. As a consequence, on a hypersurface $\Sigma$ with nonnegative sectional curvature, if $\~N_k - p_1 \~L_k \equiv 0$, then $\Sigma$ is h-convex.
\end{rem}

\begin{theo}\label{theo-4}
The functional $Q_k(t)$ is non-increasing along the IMCF (\ref{9.1}), provided that the initial hypersurface has nonnegative sectional curvature.
\end{theo}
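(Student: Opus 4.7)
The plan is to differentiate $Q_k$ directly in time by combining the area evolution under the IMCF with the variational formula of Lemma \ref{lem-monotonicity}, and then to deduce the sign from the pointwise inequality in Lemma \ref{lem-key-inequality}. Since $L_k=(2k)!\,C_{n-1}^{2k}\tilde L_k$, it suffices to treat the equivalent functional $\tilde Q_k(t):=|\Sigma_t|^{-\alpha}\int_{\Sigma_t}\tilde L_k$ with $\alpha:=\frac{n-1-2k}{n-1}>0$, whose monotonicity is equivalent to that of $Q_k$ up to a positive constant.

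The first calculation is an elementary product rule. Under the IMCF $\partial_t X=H^{-1}\nu$, the volume element evolves by $\frac{d}{dt}d\mu_t=d\mu_t$, hence $|\Sigma_t|=e^t|\Sigma_0|$ and $\frac{d}{dt}|\Sigma_t|^{-\alpha}=-\alpha\,|\Sigma_t|^{-\alpha}$. Differentiating $\tilde Q_k$ as a product and substituting Lemma \ref{lem-monotonicity} then yields
\begin{align*}
\frac{d}{dt}\tilde Q_k(t)
&= |\Sigma_t|^{-\alpha}\left[\frac{d}{dt}\int_{\Sigma_t}\tilde L_k \;-\; \alpha\int_{\Sigma_t}\tilde L_k\right] \\
&= \alpha\,|\Sigma_t|^{-\alpha}\int_{\Sigma_t}\frac{\tilde N_k - p_1\tilde L_k}{p_1}\,d\mu_t.
\end{align*}

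The remaining step is to bound the integrand pointwise. By Theorem \ref{theo-3}, the nonnegativity of the sectional curvature is preserved along the IMCF, so at every point of $\Sigma_t$ the principal curvature vector satisfies $\kappa\in\Gamma$; in particular $\kappa_i\kappa_j\geq 1$ for $i\neq j$ forces every $\kappa_i$ to be positive (so that the IMCF is admissible and $p_1>0$). Lemma \ref{lem-key-inequality} then gives $\tilde N_k-p_1\tilde L_k\leq 0$ pointwise on $\Sigma_t$, and combined with $\alpha>0$ and $|\Sigma_t|^{-\alpha}>0$ this yields $\frac{d}{dt}\tilde Q_k\leq 0$. There is no genuine obstacle here, since Sections 3 and 5 have already supplied every ingredient; the conceptual point worth emphasizing is that Lemma \ref{lem-key-inequality} is only valid on the cone $\Gamma$, so the whole strategy hinges on Theorem \ref{theo-3} to keep the flow in that cone, upgrading the algebraic identity of Lemma \ref{lem-monotonicity} to the differential inequality for $Q_k$.
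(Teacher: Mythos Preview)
Your proof is correct and follows essentially the same approach as the paper: differentiate $Q_k$ using the area evolution together with Lemma~\ref{lem-monotonicity}, and then apply the pointwise inequality of Lemma~\ref{lem-key-inequality} (valid on $\Gamma$ by Theorem~\ref{theo-3}) to conclude $\frac{d}{dt}Q_k\le 0$. You are slightly more explicit than the paper in invoking Theorem~\ref{theo-3} to keep $\kappa\in\Gamma$ along the flow, and you avoid the paper's reference to the positivity of $Q_k$ (which is unnecessary for your direct product-rule computation), but these are presentational rather than mathematical differences.
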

\begin{proof}
By the variational formula (\ref{4.1}) of $\int_{\Sigma}\~L_k$ and (\ref{4.2}), we have
\begin{align}\label{4.6}
\frac{d}{dt}\int_{\Sigma}L_k \leq \frac{n-1-2k}{n-1}\int_{\Sigma}L_k.
\end{align}
On the other hand, by (\ref{2.12}) for $k=0$, we have
\begin{align}\label{4.7}
\frac{d}{dt}|\Sigma_t|=|\Sigma_t|.
\end{align}
From (\ref{5.17}), we know that the quantity $Q_k(t)$ is positive along the IMCF. Combining with (\ref{4.6}) and (\ref{4.7}), we obtain
\begin{align}\label{4.8}
\frac{d}{dt}Q_k(t) \leq 0.
\end{align}
\end{proof}

\section{Proof of Main Theorems}
In this section, we will give the proof of Theorem \ref{theo-1}.
\begin{proof}[Proof of Theorem \ref{theo-1}]
The proof follows the spirit of the proof of Theorem A due to Ge-Wang-Wu \cite{Ge-Wang-Wu2014}, the main difference is that here we use the inverse mean curvature flow and we only assume the initial hypersurface $\Sigma$ has nonnegative sectional curvature. First, by the definition of $Q_k(t)$, it is equivalent to show that
\begin{align}\label{5.1}
Q_k(0) \geq C_{n-1}^{2k} (2k)! \omega_{n-1}^{\frac{2k}{n-1}}.
\end{align}
By Proposition \ref{pro-convergence-result}, the nonnegativity of sectional curvature of $\Sigma_t$ is preserved along the IMCF. It follows from Theorem \ref{theo-4} that the functional $Q_k(t)$ is monotone non-increasing. Hence, it suffices to show
\begin{align}\label{5.2}
\lim_{t\ra \infty} Q_k(t) \geq C_{n-1}^{2k}(2k)! \omega_{n-1}^{\frac{2k}{n-1}}.
\end{align}
The hyperbolic space can be represented as a warped product $\H^n=\R^{+}\times \S^{n-1}$ endowed with the metric
$$
\-g= dr^2+\l(r)^2 g_{\S^{n-1}},
$$
where the warping function $\l(r)=\sinh r$ and $g_{\S^{n-1}}$ is the standard round metric on $\S^{n-1}$. Since $\Sigma$ has nonnegative sectional curvature, it is a strictly convex hypersurface in $\H^n$. By the Hadamard theorem for strictly convex hypersurface in $\H^n$ (see do Carmo and Warner \cite{doCarmo-Warner1970} or Theorem 10.3.1 in Gerhardt's book \cite{Gerhardt2006}), $\Sigma$ bounds a strictly convex body $\Omega$ in $\H^n$, so it can be written as a graph of function $r(\t)$, $\t\in \S^{n-1}$. Hence the initial hypersurface $\Sigma$ can be represented by an embedding $X_0:\S^{n-1}\ra \H^n$. Let $X(t,\cdot):\S^{n-1} \ra \H^n$, $t\in [0,T)$ be the solution of IMCF with the initial data given by $X_0$. The evolving hypersurface $\Sigma_t$ can be written as a graph
$$
\Sigma_t=\{ (r(t,\t),\t) ~:~\t \in \S^{n-1} \},
$$
where $r(t,\cdot)$ is a positive function defined on $\S^{n-1}$.
We define a new function $\varphi:\S^{n-1}\ra \R$ by
$$
\varphi(\t)=\Phi(r(\t)),
$$
where $\Phi$ is a positive function satisfying $\Phi'(r)=\frac{1}{\l(r)}$. Let $\varphi_i=\nabla_i \varphi$ and $\nabla_{ij}\varphi=\nabla_j\nabla_i \varphi$ denote the covariant derivatives of $\varphi$ with respect to $g_{\S^{n-1}}$. We also define another function
$$
v=\sqrt{1+|\nabla \varphi|^2_{g_{\S^{n-1}}}}.
$$
Let $g=(g_{ij})$ be the induced metric on $\Sigma$ and $h=(h_{ij})$ be the second fundamental form in terms of the coordinates $\t^j$. Then we have
\begin{align}\label{5.3}
g_{ij}=\l^2(\s_{ij}+\varphi_i\varphi_j), \quad h_{ij}=\frac{\l}{v}(\l'(\s_{ij}+\varphi_i\varphi_j)-\varphi_{ij}).
\end{align}
By (\ref{9.11}) in Proposition \ref{pro-convergence-result}, we have
\begin{align}\label{5.4}
\l=O(e^{\frac{t}{n-1}}), \quad |\nabla \varphi|_{g_{\S^{n-1}}}+|\nabla^2 \varphi|_{g_{\S^{n-1}}}=O(e^{-\frac{t}{n-1}}).
\end{align}
Then by the identity $\l'^2=\l^2+1$ we get
\begin{align}\label{5.5}
\l'=\l+\frac{1}{2\l}+O(e^{-\frac{4t}{n-1}}).
\end{align}
We also have
\begin{align}\label{5.6}
\frac{1}{v}=1-\frac{1}{2}|\nabla \varphi|_{g_{\S^{n-1}}}^2 + O(e^{-\frac{4t}{n-1}}).
\end{align}
By (\ref{5.3}), the Weingarten tensor $\mathcal{W}=(h_i^j)$ of $\Sigma_t$ can be expressed as
\begin{equation}\label{5.7}
\begin{split}
h_{i}^j=&\frac{\l'}{v\l}(\d_i^j-\frac{\varphi_i^j}{\l'}+\frac{\varphi_i\varphi_j\varphi^{jl}}{v^2 \l'}) \\
=&\d_i^j+\(\frac{1}{2\l^2}-\frac{1}{2}|\nabla \varphi|_{g_{\S^{n-1}}}^2\)\d_i^j-\frac{\varphi_i^j}{\l}+O(e^{-\frac{4t}{n-1}}).
\end{split}
\end{equation}
We take
\begin{align}\label{5.8}
T_i^j:=\(\frac{1}{2\l^2}-\frac{1}{2}|\nabla \varphi|_{g_{\S^{n-1}}}^2\)\d_i^j-\frac{\varphi_i^j}{\l},
\end{align}
then the Gauss equation gives
\begin{equation}\label{5.9}
\begin{split}
R_{ij}{}^{kl}=&-(\d_i^k \d_j^l-\d_i^l \d_j^k)+(h_i^kh_j^l-h_i^lh_j^k) \\
=&\d_i^k T_j^l+T_i^k \d_j^l -T_i^l \d_j^k-\d_i^l T_j^k +O(e^{-\frac{4t}{n-1}}).
\end{split}
\end{equation}
A direct calculation gives the expression for $L_k$:
\begin{align}\label{5.10}
L_k=2^k k! (n-1-k) \cdots (n-2k)\s_k(T)+O(e^{-\frac{(2k+2)t}{n-1}}).
\end{align}
By a similar argument as the proof of Ge-Wang-Wu \cite{Ge-Wang-Wu2014}, we can get
\begin{align}\label{5.17}
\lim_{t\ra \infty} |\Sigma_t|^{-\frac{n-1-2k}{n-1}}\int_{\Sigma_t}L_k \geq (2k)!C_{n-1}^{2k} \omega_{n-1}^{\frac{2k}{n-1}}.
\end{align}
When (\ref{5.1}) is an equality, then $Q_k(t)\equiv (2k)!C_{n-1}^{2k} \omega_{n-1}^{\frac{2k}{n-1}}$, which implies that the equality in (\ref{4.2}) on $\Sigma_t$. For $k\geq 2$, it may not be totally umbilical everywhere. However, in both cases (i) and (ii) of Lemma \ref{lem-key-inequality}, the nonnegativity of sectional curvature implies the h-convexity. Thus, the initial hypersurface $\Sigma$ is h-convex. It follows from Theorem \ref{theo-4.2} that $\Sigma_t$ is strictly h-convex for $t>0$, which excludes the case (ii) in Lemma \ref{lem-key-inequality}. Thus, we have $\Sigma_t$ is totally umbilical for $t>0$ and hence it is a geodesic sphere in $\H^n$. As $t\ra 0$, the initial hypersurface $\Sigma$ is smoothly approximated by a family of geodesic spheres, and thus it is also a geodesic sphere in $\H^n$. It is easy to see that if $\Sigma$ is a geodesic sphere of radius $r$, then the area of $\Sigma$ is $|\Sigma|=\omega_{n-1}\sinh^{n-1}r$. By (\ref{1.3}), the integral of $L_k$ is
\begin{align*}
\int_{\Sigma}L_k=&\omega_{n-1}\sinh^{n-1}r \cdot C_{n-1}^{2k}(2k)!\sum_{j=0}^{k}(-1)^j C_k^j (\coth r)^{2k-2j} \\
=&\omega_{n-1}\sinh^{n-1}r \cdot  C_{n-1}^{2k}(2k)!(-1+\coth^2 r)^{k} \\
=&C_{n-1}^{2k}(2k)! \omega_{n-1}\sinh^{n-1-2k}r \\
=&C_{n-1}^{2k}(2k)! \omega_{n-1}^{\frac{2k}{n-1}}|\Sigma|^{\frac{n-1-2k}{n-1}}.
\end{align*}
Hence the equality holds in (\ref{1.4}) on a geodesic sphere. This completes the proof of Theorem \ref{theo-1}.
\end{proof}

\begin{proof}[Proof of Theorems \ref{theo-2.1} and \ref{theo-2.2}]
By Theorem \ref{theo-1}, for $2k<n-1$ we have
\begin{align*}
\int_{\Sigma}L_k \geq  C_{n-1}^{2k}(2k)!\omega_{n-1}^{\frac{2k}{n-1}}|\Sigma|^{\frac{n-1-2k}{n-1}}.
\end{align*}
Together with the expressions (\ref{1.6}) for $\int_{\Sigma}p_{2k}$ and $W_{2k+1}$, we prove (\ref{1.7}) and (\ref{1.8}) for $2k<n-1$. By Theorem \ref{theo-1}, the equality holds in (\ref{1.7}) or (\ref{1.8}) if and only if $\Sigma$ is a geodesic sphere.

When $2k+1=n$, the Hadamard theorem for strictly convex hypersurfaces in $\H^n$ implies that $\Sigma$ is diffeomorphic to $\S^{n-1}$. By the Gauss-Bonnet-Chern theorem \cite{Chern1944,Chern1945}, we have the identity:
\begin{align}
\int_{\Sigma} L_{\frac{n-1}{2}}=(n-1)!\omega_{n-1}.
\end{align}
Thus, (\ref{1.7}) and (\ref{1.8}) also hold for $2k+1=n$. By Theorem \ref{theo-1}, the equality case in (\ref{1.7}) or (\ref{1.8}) for $2k+1=n$ if and only if $\Sigma$ is a geodesic sphere.
\end{proof}

\section{Other Alexandrov-Fenchel type inequality}
In this section, we show that the Alexandrov-Fenchel inequality for $\int_{\Sigma}p_1$ on hypersurfaces with nonnegative Ricci curvature in $\H^n$. This inequality has been proved under the assumption of h-convexity by Ge-Wang-Wu \cite{Ge-Wang-Wu2014} with a help of a result of Cheng-Zhou \cite{Cheng-Zhou2014} (see also Wang-Xia \cite{Wang-Xia2014} by using the quermassintegral preserving flows in hyperbolic space).
\begin{theo}\label{theo-5}
Let $n\geq 3$. Any hypersurface $\Sigma$ with nonnegative Ricci curvature in $\H^n$ satisfies
\begin{align}\label{6.1}
\int_{\Sigma} p_1 \geq \omega_{n-1} \left[\(\frac{|\Sigma|}{\omega_{n-1}}\)^2 + \(\frac{|\Sigma|}{\omega_{n-1}}\)^{\frac{2(n-2)}{n-1}} \right]^\frac{1}{2}.
\end{align}
The equality holds if and only if $\Sigma$ is a geodesic sphere in $\H^n$.
\end{theo}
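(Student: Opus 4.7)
The plan is to mirror the proof of Theorem~\ref{theo-1}: run the IMCF from $\Sigma$, identify a monotone functional, evaluate its asymptotic limit via the convergence recorded in Proposition~\ref{pro-convergence-result}, and transfer the inequality back to $t=0$. A natural candidate is
$$
\mathcal{Q}(t):=\frac{\bigl(\int_{\Sigma_t}p_1\bigr)^2-|\Sigma_t|^2}{|\Sigma_t|^{2(n-2)/(n-1)}},
$$
which takes the value $\omega_{n-1}^{2/(n-1)}$ on every geodesic sphere, so that \eqref{6.1} is equivalent to $\mathcal{Q}(0)\ge\omega_{n-1}^{2/(n-1)}$, while the asymptotic convergence of $\Sigma_t$ to round spheres gives $\mathcal{Q}(t)\to\omega_{n-1}^{2/(n-1)}$.

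The first step is to establish an analogue of Theorem~\ref{theo-3} for Ricci: by the Gauss equation $\mathrm{Ric}_{ij}=Hh_{ij}-(h^2)_{ij}-(n-2)g_{ij}$, so preservation of nonneg Ricci along IMCF is exactly preservation of the positive semidefiniteness of this symmetric tensor. Feeding \eqref{9.2} into its evolution and checking the null-vector hypothesis of Andrews' tensor maximum principle (Theorem~\ref{theo-4.1}) yields the claim, in the same spirit as the proof of Theorem~\ref{theo-4.2}. Combined with Gerhardt's convergence, the principal curvatures tend uniformly to $1$ and the limit $\mathcal{Q}(t)\to\omega_{n-1}^{2/(n-1)}$ follows.

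The monotonicity $\mathcal{Q}'(t)\le 0$ is where the bulk of the work lies. Using \eqref{4.7} and \eqref{2.12} with $k=1$, it reduces, after rearrangement, to
$$
(n-2)\int_{\Sigma}p_1\cdot\int_{\Sigma}\frac{p_1^2-p_2}{p_1}\;\ge\;\int_{\Sigma}p_1\cdot\int_{\Sigma}\frac{1}{p_1}-|\Sigma|^2.
$$
Both sides are nonnegative---the left by Newton--MacLaurin ($p_1^2\ge p_2$), the right by Cauchy--Schwarz---and the task is to show the left dominates. I expect this to be the main technical obstacle: the coarse consequences $p_1,p_2\ge 1$ of nonneg Ricci are too weak (both sides vanish on geodesic spheres), so the proof must use the full pointwise bound $\kappa_i\sum_{j\ne i}\kappa_j\ge n-2$. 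As a fallback, one may instead use Newton--MacLaurin ($p_1\ge\sqrt{p_2}$) to reduce to an intrinsic inequality for $\int\sqrt{p_2}$ and appeal to a Cheng--Zhou-type sharp inequality \cite{Cheng-Zhou2014}, exactly as in the h-convex case of Ge--Wang--Wu.

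For rigidity, equality in \eqref{6.1} forces $\mathcal{Q}$ to be constant along IMCF, i.e.\ equality in the displayed integral inequality for every $t>0$. Unwinding this equality, together with Theorem~\ref{theo-4.2} to rule out degenerate cases (as in the rigidity for Theorem~\ref{theo-1}), forces $\Sigma_t$ to be totally umbilical, hence a geodesic sphere in $\mathbb{H}^n$; smoothly letting $t\to 0^+$ then identifies $\Sigma$ itself as a geodesic sphere.
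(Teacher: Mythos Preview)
Your proposal has a genuine gap at its very first step. You assert that nonnegativity of Ricci curvature is preserved along the IMCF, to be checked ``in the same spirit as the proof of Theorem~\ref{theo-4.2}''. But this is not established anywhere, and in fact the paper explicitly records, immediately after this theorem, that preservation of nonnegative Ricci curvature along IMCF \emph{remains an open problem}. The tensor $\mathrm{Ric}_{ij}=Hh_{ij}-(h^2)_{ij}-(n-2)g_{ij}$ is quadratic in $h$, and the null-vector/gradient terms arising from \eqref{9.2} do not combine as cleanly as in the sectional-curvature or h-convex cases; you cannot simply wave at Theorem~\ref{theo-4.1}. Without this preservation you have neither the long-time existence/convergence statement (Proposition~\ref{pro-convergence-result} requires nonnegative sectional curvature), nor the pointwise hypothesis needed for your proposed monotonicity inequality at all later times. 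Your monotonicity step is also left unresolved: you correctly identify it as the main technical obstacle and do not prove it.

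The paper's proof avoids flow methods entirely for this theorem and is much shorter. Nonnegative Ricci curvature implies strict convexity, hence (by the Hadamard theorem) $\Sigma$ is star-shaped and $2$-convex. Two known inequalities are then combined directly:
\begin{enumerate}
\item the Li--Wei--Xiong inequality \cite{Li-Wei-Xiong2014} for star-shaped $2$-convex hypersurfaces,
\[
\int_{\Sigma}p_2 \;\ge\; |\Sigma|+\omega_{n-1}^{\frac{2}{n-1}}|\Sigma|^{\frac{n-3}{n-1}};
\]
\item the Cheng--Zhou inequality \cite{Cheng-Zhou2014}, valid under nonnegative Ricci curvature,
\[
\Bigl(\int_{\Sigma}p_1\Bigr)^2 \;\ge\; \int_{\Sigma}p_2 \cdot |\Sigma|.
\]
\end{enumerate}
Multiplying the first by $|\Sigma|$ and inserting into the second gives \eqref{6.1} immediately; equality in the Li--Wei--Xiong step forces $\Sigma$ to be a geodesic sphere. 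Your ``fallback'' remark about Cheng--Zhou is in fact the entire argument---no IMCF, no monotone functional, no asymptotic analysis is needed.
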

\begin{proof}
The nonnegativity of the Ricci curvature implies that the hypersurface is strictly convex, and the Hadamard theorem for strictly convex hypersurfaces in $\H^n$ implies that $\Sigma$ is $2$-convex and starshaped. It follows from the result of Li-Wei-Xiong \cite{Li-Wei-Xiong2014} that
\begin{align}\label{6.3}
\int_{\Sigma}p_2 \geq |\Sigma|+\omega_{n-1}^\frac{2}{n-1} |\Sigma|^\frac{n-3}{n-1}.
\end{align}
Moreover, the equality holds in (\ref{6.3}) if and only if $\Sigma$ is a geodesic sphere. On the other hand, if the hypersurface $\Sigma$ has nonnegative Ricci curvature, by (1.4) in \cite{Cheng-Zhou2014} and a direct calculation, we have
\begin{align}\label{6.2}
\int_{\Sigma} p_2 \int_\Sigma p_0 \leq \(\int_\Sigma p_1\)^2.
\end{align}
Together with (\ref{6.3}), we get (\ref{6.1}). If the equality holds in (\ref{6.1}), then (\ref{6.3}) is also an equality, and hence $\Sigma$ is a geodesic sphere.
\end{proof}
It remains an open problem that whether or not the nonnegativity of Ricci curvature is preserved along the IMCF. There are many attempts to prove (\ref{6.1}) under weaker assumptions. The first author \cite{Hu2018} proved the Willmore inequality for hypersurface in hyperbolic space:
\begin{thmB}
Let $n\geq 3$. Any starshaped and mean-convex hypersurface $\Sigma$ in $\H^n$ satisfies
\begin{align*}
\int_{\Sigma}p_{1}^2 \geq |\Sigma|+\omega_{n-1}^{\frac{2}{n-1}}|\Sigma|^\frac{n-3}{n-1}.
\end{align*}
The equality holds if and only if $\Sigma$ is a geodesic sphere in $\H^n$.
\end{thmB}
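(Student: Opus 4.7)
The plan is to prove Theorem B via the inverse mean curvature flow \eqref{9.1}. By Gerhardt's theorem \cite{Gerhardt2011}, for any starshaped strictly mean-convex initial hypersurface the IMCF exists for all $t\in[0,\infty)$ and satisfies $|h_i^j-\d_i^j|\leq C e^{-t/(n-1)}$. Motivated by the scaling of the Willmore inequality, I introduce the normalized functional
\[
F(t):=|\Sigma_t|^{-\frac{n-3}{n-1}}\(\int_{\Sigma_t}p_1^2-|\Sigma_t|\).
\]
I would show that $F$ is non-increasing along the flow and that $\lim_{t\ra\infty}F(t)=\omega_{n-1}^{2/(n-1)}$; this yields $F(0)\geq\omega_{n-1}^{2/(n-1)}$, which is exactly the Willmore inequality.

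For the monotonicity, I would compute directly with the evolution identities $\partial_t d\mu=d\mu$ and $\partial_t H=-\D(1/H)-|A|^2/H+(n-1)/H$ (using $\mrm{Ric}_{\H^n}(\nu,\nu)=-(n-1)$). An integration by parts gives
\[
\frac{d}{dt}\int_{\Sigma_t}H^2=-2\int\frac{|\nabla H|^2}{H^2}-2\int|A|^2+2(n-1)|\Sigma_t|+\int H^2.
\]
Substituting the pointwise Cauchy-Schwarz bound $|A|^2\geq H^2/(n-1)$, recalling $p_1=H/(n-1)$, and combining with $\frac{d}{dt}|\Sigma_t|=|\Sigma_t|$, an algebraic cancellation of the two exponential rates $(n-3)/(n-1)$ produces
\[
F'(t)\leq -\frac{2}{(n-1)^2}|\Sigma_t|^{-\frac{n-3}{n-1}}\int_{\Sigma_t}\frac{|\nabla H|^2}{H^2}\leq 0,
\]
with equality along the flow only if $\nabla H\equiv 0$ and $|A|^2\equiv H^2/(n-1)$, i.e., $\Sigma_t$ is totally umbilical with constant mean curvature.

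For the limiting value, I would exploit the fact that the Willmore inequality is saturated on every geodesic sphere: the identity $p_1=\coth r$ on a sphere of radius $r$ gives $\int_\Sigma p_1^2=|\Sigma|+\omega_{n-1}^{2/(n-1)}|\Sigma|^{(n-3)/(n-1)}$, so $F\equiv\omega_{n-1}^{2/(n-1)}$ on spheres. Using the graph representation of Section 6 combined with the expansion \eqref{5.7} for $h_i^j$, I would verify that the leading-order contribution to $\int_{\Sigma_t}p_1^2-|\Sigma_t|$ is precisely $\omega_{n-1}^{2/(n-1)}|\Sigma_t|^{(n-3)/(n-1)}$, with error of strictly smaller exponential order, yielding $F(t)\ra\omega_{n-1}^{2/(n-1)}$; this runs parallel to the limiting analysis performed for Theorem \ref{theo-1}.

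For the rigidity, if equality holds in Theorem B then $F(0)=\lim_{t\ra\infty}F(t)$, so monotonicity forces $F\equiv\omega_{n-1}^{2/(n-1)}$ and hence $F'\equiv 0$; the equality case of the Cauchy-Schwarz bound makes every $\Sigma_t$ totally umbilical with constant mean curvature, hence a geodesic sphere, and smooth convergence as $t\searrow 0$ identifies $\Sigma$ itself with a geodesic sphere. The most delicate point I expect is the asymptotic identification $\lim F(t)=\omega_{n-1}^{2/(n-1)}$: one needs control on $|A|^2-H^2/(n-1)$ finer than the crude $O(e^{-t/(n-1)})$ estimate \eqref{9.11}, to ensure that the remainder in the expansion of $\int p_1^2-|\Sigma_t|$ is genuinely of lower order than $|\Sigma_t|^{(n-3)/(n-1)}$ rather than merely bounded; securing this requires the full asymptotic expansion of the graph function $\varphi$ in the spirit of Section 6.
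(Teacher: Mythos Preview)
The paper does not contain a proof of Theorem~B: it is quoted as a result of the first author from \cite{Hu2018}, so there is no in-paper argument to compare against. Your overall strategy (monotonicity of a scale-normalized functional under IMCF plus asymptotic analysis) is the natural one and is presumably close to what is done in \cite{Hu2018}. Your monotonicity computation is correct: the identity
\[
\frac{d}{dt}\int_{\Sigma_t}H^2=-2\int_{\Sigma_t}\frac{|\nabla H|^2}{H^2}-2\int_{\Sigma_t}|A|^2+2(n-1)|\Sigma_t|+\int_{\Sigma_t}H^2
\]
combined with $|A|^2\geq H^2/(n-1)$ indeed yields $F'(t)\leq -\tfrac{2}{(n-1)^2}|\Sigma_t|^{-(n-3)/(n-1)}\int |\nabla H|^2/H^2$.

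There is, however, a genuine gap in your limiting step. You assert that the leading-order contribution to $\int_{\Sigma_t}p_1^2-|\Sigma_t|$ is \emph{precisely} $\omega_{n-1}^{2/(n-1)}|\Sigma_t|^{(n-3)/(n-1)}$, so that $F(t)\to\omega_{n-1}^{2/(n-1)}$. This is false in general: the IMCF in $\H^n$ does \emph{not} become round. Gerhardt's theorem gives $\kappa_i\to 1$, but the graph function $\varphi$ converges to a typically non-constant limit, and the leading term of $F(t)$ is a nontrivial functional of this limiting conformal factor. Your diagnosis that the difficulty is merely controlling remainder terms of lower exponential order misses the point: even after the expansion is carried out exactly as in \eqref{5.7}--\eqref{5.10}, the leading coefficient is not $\omega_{n-1}^{2/(n-1)}$ automatically. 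What one actually proves is $\liminf_{t\to\infty}F(t)\geq\omega_{n-1}^{2/(n-1)}$, and this last step requires an external sharp inequality on $\S^{n-1}$ --- in \cite{Hu2018} it is Beckner's inequality \cite{Beckner1992}, playing the same role that the Guan--Wang Sobolev inequality \cite{Guan-Wang2004} plays for Theorem~\ref{theo-1} in this paper. Once you have $\liminf F(t)\geq\omega_{n-1}^{2/(n-1)}$, both the inequality and your rigidity argument go through unchanged, since equality at $t=0$ still forces $F'\equiv 0$.
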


Motivated by Theorem \ref{theo-2.1}, we would like to propose the following conjecture.
\begin{conj}
Let $n\geq 3$ and $2k+1\leq n-1$. Any hypersurface $\Sigma$ with nonnegative sectional curvature in $\H^n$ satisfies
\begin{align}\label{6.4}
\int_{\Sigma} p_{2k+1}\geq \omega_{n-1}\left[\(\frac{|\Sigma|}{\omega_{n-1}}\)^\frac{2}{2k+1}+\(\frac{|\Sigma|}{\omega_{n-1}}\)^{\frac{2}{2k+1}\frac{n-2-2k}{n-1}}\right]^\frac{2k+1}{2},
\end{align}
The equality holds in (\ref{6.4}) if and only if $\Sigma$ is a geodesic sphere in $\H^n$.
\end{conj}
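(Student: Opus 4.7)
The plan is to reduce the conjecture to the reverse Cauchy--Schwarz-type inequality
\begin{align*}
\(\int_\Sigma p_{2k+1}\)^2 \geq \int_\Sigma p_{2k}\int_\Sigma p_{2k+2},
\end{align*}
denoted $(\star)$, for any $\Sigma$ with nonnegative sectional curvature in $\H^n$; this parallels how Theorem \ref{theo-5} is obtained via the $k=0$ case of $(\star)$, namely the Cheng--Zhou inequality. Consistency on spheres: a geodesic sphere of radius $r$ has $p_j=\coth^j r$, so $\int_\Sigma p_{2k+1}=\omega_{n-1}\sinh^{n-2-2k}r\cosh^{2k+1}r$, which matches the RHS of (\ref{6.4}) once $\sinh^{n-1}r=|\Sigma|/\omega_{n-1}$ is substituted; the pointwise Newton identity $p_{2k}p_{2k+2}=p_{2k+1}^2$ makes $(\star)$ sharp on spheres as well.

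Granted $(\star)$, Theorem \ref{theo-2.1} controls the two factors on its RHS. Setting $u:=(|\Sigma|/\omega_{n-1})^{1/(n-1)}$, the lower bounds for $\int_\Sigma p_{2k}$ and $\int_\Sigma p_{2k+2}$ become $\omega_{n-1}u^{n-1-2k}(1+u^2)^k$ and $\omega_{n-1}u^{n-3-2k}(1+u^2)^{k+1}$ respectively, whose product is $\omega_{n-1}^2 u^{2n-4-4k}(1+u^2)^{2k+1}$---exactly the square of the conjectured RHS of (\ref{6.4}). Taking square roots gives (\ref{6.4}), and equality propagates back through Theorem \ref{theo-2.1} to force $\Sigma$ to be a geodesic sphere.

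The main obstacle is proving $(\star)$. Pointwise Newton--MacLaurin gives $p_{2k}p_{2k+2}\le p_{2k+1}^2$ on $\-{\G_{2k+2}^{+}}$ and hence $\int p_{2k}p_{2k+2}\le \int p_{2k+1}^2$, but Cauchy--Schwarz then points in the wrong direction, so $(\star)$ must be proved globally. My intended route is to establish IMCF monotonicity of the scale-invariant ratio
\[
\tilde F(t):=\frac{\(\int_{\Sigma_t}p_{2k+1}\)^2}{\int_{\Sigma_t}p_{2k}\int_{\Sigma_t}p_{2k+2}}.
\]
By Proposition \ref{pro-convergence-result} the principal curvatures of $\Sigma_t$ tend uniformly to $1$, so $\tilde F(\infty)=1$, and by Theorem \ref{theo-3} the nonneg sectional-curvature hypothesis is preserved. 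If $\tilde F$ is non-increasing along IMCF, then $\tilde F(0)\ge 1$, i.e., $(\star)$. Using (\ref{2.12}) to differentiate, this reduces $(\star)$ to a pointwise symmetric-function inequality in $p_1, p_{2k-1},\ldots, p_{2k+3}$ on the cone $\{\kappa_i\kappa_j\ge 1\}$, to be attacked by repeated use of Newton--MacLaurin together with the Ge--Wang--Wu-style polynomial identity behind Lemma \ref{lem-key-inequality}. A fallback---should this monotonicity fail---is to adapt the volume-preserving curvature-flow machinery of Andrews--Chen--Wei \cite{Andrews-Chen-Wei2018}, which is purpose-built for Alexandrov--Fenchel problems under nonnegative sectional curvature.
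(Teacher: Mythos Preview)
The statement you are attempting is labeled \emph{Conjecture} in the paper, and the paper gives no proof; it only remarks that Wang--Xia \cite{Wang-Xia2014} established (\ref{6.4}) under the stronger h-convexity hypothesis. So there is no ``paper's own proof'' to compare against: you are proposing a strategy for an open problem.

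Your reduction is clean and correct where it applies. Given $(\star)$ and Theorem \ref{theo-2.1} for $\int_\Sigma p_{2k}$ and $\int_\Sigma p_{2k+2}$, the product of the two lower bounds is exactly the square of the right-hand side of (\ref{6.4}), and the rigidity follows from the rigidity in Theorem \ref{theo-2.1}. Note, however, that this route needs $2k+2\le n-1$ to invoke Theorem \ref{theo-2.1} for $\int_\Sigma p_{2k+2}$; the endpoint case $2k+1=n-1$ of the conjecture is not covered (there $p_{2k+2}$ is not even defined), so you would need a separate argument there.

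The genuine gap is $(\star)$ itself, and your plan does not close it. The inequality $(\star)$ is a higher-$k$ analogue of the Cheng--Zhou inequality (\ref{6.2}); the $k=0$ case already required a nontrivial global argument, and for $k\ge 1$ it is, as far as the paper indicates, open under the nonnegative-sectional-curvature hypothesis---this is precisely why (\ref{6.4}) is posed as a conjecture. Your proposed route via monotonicity of $\tilde F(t)$ along IMCF does \emph{not} reduce to a pointwise symmetric-function inequality: differentiating $\log\tilde F$ with (\ref{2.12}) produces a combination of ratios of integrals of the form $\bigl(\int p_j\bigr)^{-1}\int p_1^{-1}[(n-1-j)p_{j+1}+jp_{j-1}]$ for $j=2k,2k+1,2k+2$, and the sign of such a combination cannot be decided pointwise---it is a genuinely global statement about how the various curvature integrals interact. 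This is qualitatively harder than the monotonicity of $Q_k$ in Theorem \ref{theo-4}, which \emph{does} reduce to the pointwise inequality of Lemma \ref{lem-key-inequality}. The fallback you mention (Andrews--Chen--Wei \cite{Andrews-Chen-Wei2018}) proves Alexandrov--Fenchel inequalities comparing quermassintegrals $W_k$ to one another, not the area-based inequality (\ref{6.4}), so it does not directly yield the conjecture either. In short: your framework is the natural one, but the decisive step $(\star)$ remains exactly as open after your proposal as before.
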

The inequality (\ref{6.4}) was proved by Wang-Xia \cite{Wang-Xia2014} under the stronger condition that $\Sigma$ is h-convex.

\end{document}